\tikzstyle{subgroup}=[scale=1]
\newtheorem{prevtheorem}{Theorem}
\newtheorem{theorem}{Theorem}
\newtheorem{proposition}[theorem]{Proposition}
\newtheorem{corollary}[theorem]{Corollary}
\def\CD#1{\mathcal{CD}#1}
\title{The Chermak--Delgado Measure as a Map on Posets}
\author{William Cocke}
\address{Language Technology Institute, Carnegie Mellon University, Pittsburgh, PA 15213}
\email{cocke@cmu.edu}
\author{Ryan McCulloch}
\address{Department of Mathematics, University of Bridgeport, Bridgeport, CT 06604}
\email{rmccullo1985@gmail.com}
\date{\today}
\begin{document}

\begin{abstract}
The Chermak--Delgado measure of a finite group is a function which assigns to each subgroup a positive integer. In this paper, we give necessary and sufficient conditions for when the Chermak--Delgado measure of a group is actually a map of posets, i.e., a monotone function from the subgroup lattice to the positive integers. We also investigate when the Chermak--Delgado measure, restricted to the centralizers, is increasing. 
\end{abstract}

\keywords{finite group theory, group theory, Chermak--Delgado measure, Chermak--Delgado lattice}

\subjclass[2020]{Primary 20D30, 20E15}

\maketitle

\section{Introduction.}
The Chermak--Delgado measure is a function from the subgroup lattice of a finite group $G$ to the positive integers that maps a subgroup $H$ to $|H|\cdot |\mathbf{C}_G(H)|$. We write the Chermak--Delgado measure for a finite group $G$ as $m_G$. Chermak and Delgado \cite{Che_89} introduced the function $m_G$ in a slightly more general form. The function $m_G$ has the peculiar property that the subgroups $H\leq G$ such that $m_G(H)$ has maximal value form a sublattice, the so-called Chermak--Delgado lattice of $G$. We write $\mathcal{CD}(G)$ for the Chermak--Delgado lattice of $G$. A proof that the fiber of the maximum value of $m_G$ forms a lattice and other basic properties of $\mathcal{CD}(G)$ can be found in Isaacs \cite[Section 1.E]{FGT}. More modern research on the Chermak--Delgado often focuses on finding groups such that the Chermak--Delgado lattice has a specified shape \cite{An_22_2}, or on specific groups \cite{Mor_18}, or other structural properties of the lattice itself \cite{Bre_14_2, Bur_23, Mcc_18}. We direct students interested in the Chermak--Delgado lattice  to the article by Wilcox \cite{Wil_16}, in which she gives a non-exhaustive list of potential research directions suitable for students. 

We write $\mathcal{S}(G)$ for the the subgroup lattice of $G$. Then $m_G$ is a map from $\mathcal{S}(G)$ to the positive integers $\mathbb{Z}^{+}$. In Figure \ref{fig: S3} we represent the map $m_G$ where $G = S_3$, the symmetric group on 3 symbols. 

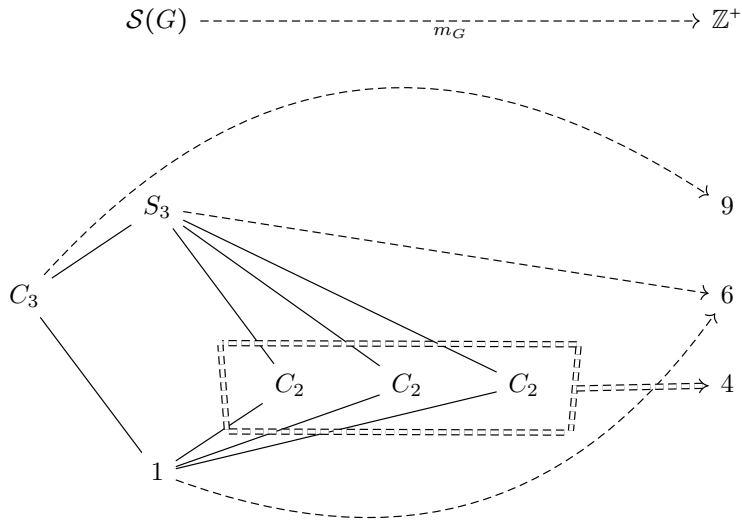
\begin{figure}[H]
    \centering
\begin{tikzcd}
	& {\mathcal{S}(G)} &&&&& {\mathbb{Z}^+} \\
	\\
	\\
	& {S_3} &&&&& 9 \\
	{C_3} && {} && {} && 6 \\
	& {} & {C_2} & {C_2} & {C_2} & {} & 4 \\
	& 1 & {} && {}
	\arrow[no head, from=7-2, to=5-1]
	\arrow[no head, from=5-1, to=4-2]
	\arrow[no head, from=6-5, to=4-2]
	\arrow[no head, from=6-3, to=4-2]
	\arrow[no head, from=6-4, to=4-2]
	\arrow[no head, from=7-2, to=6-3]
	\arrow[no head, from=7-2, to=6-4]
	\arrow[no head, from=7-2, to=6-5]
	\arrow[curve={height=60pt}, dashed, from=7-2, to=5-7]
	\arrow[dashed, from=4-2, to=5-7]
	\arrow[curve={height=-80pt}, dashed, from=5-1, to=4-7]
	\arrow["{m_G}"', dashed, from=1-2, to=1-7]
	\arrow[""{name=0, anchor=center, inner sep=0}, draw=none, from=5-3, to=6-2]
	\arrow[""{name=1, anchor=center, inner sep=0}, draw=none, from=6-2, to=7-3]
	\arrow[""{name=2, anchor=center, inner sep=0}, draw=none, from=6-6, to=7-5]
	\arrow[""{name=3, anchor=center, inner sep=0}, draw=none, from=5-5, to=6-6]
	\arrow[Rightarrow, dashed, no head, from=1, to=2]
	\arrow[""{name=4, anchor=center, inner sep=0}, Rightarrow, dashed, no head, from=3, to=2]
	\arrow[Rightarrow, dashed, no head, from=3, to=0]
	\arrow[Rightarrow, dashed, no head, from=1, to=0]
	\arrow[Rightarrow, dashed, from=4, to=6-7]
\end{tikzcd}
    \caption{A diagram for $G=S_3$ showing the Chermak--Delgado measure $m_G:\mathcal{S}(G)\rightarrow \mathbb{Z}^+$. All of the subgroups isomorphic to $C_2$ have measure $4$.}
    \label{fig: S3}
\end{figure}

As suggested by the diagram in Figure \ref{fig: S3}, both the domain and the codomain of the map $m_G$ are posets; where we take $x\preceq y$ in $\mathbb{Z}^{+}$ to mean that $x\leq y$. However, the function $m_G$ does not in general preserve this poset structure. To use the common vernacular of category theory, we could say that $m_G$ forgets the poset structure of its domain and codomain. In this paper we investigate 
when the map $m_G$ is actually a map of posets, i.e., when $m_G:\mathcal{S}(G)\rightarrow \mathbb{Z}^{+}$ is monotone increasing or decreasing. The question of when $m_G$ is a morphism of posets was investigated by Fasol{\u{a}} \cite{Fas_24} and we explain this connection in Section \ref{sec: back}. 

We now present two examples of finite groups together with their Chermak--Delgado measures, one such that the measure is monotone, and one where the measure is not monotone. In Figure \ref{fig: D8} we present the subgroup lattice and Chermak--Delgado measure of the dihedral group of order 8 (written as $D_8$). The subgroup lattice can be partitioned into two parts, each containing $5$ subgroups. Each of the nontrivial normal subgroups has a Chermak--Delgado measure of $16$, while each of the other five subgroups has a Chermak--Delgado measure of 8.  Hence the Chermak--Delgado measure from $D_8$ to $\{8,16\}$ forms a poset. In contrast, the Chermak--Delgado measure from $A_4$, the alternating group on 4 symbols, is not monotone. In Figure \ref{fig: A4} we present a diagram showing the map $m_G$ for $G=A_4$.

\begin{figure}
    \centering
\begin{tikzcd}
	&&& {\mathcal{S}(G)} &&& {\mathbb{Z}^{+}} \\
	&& {} \\
	&&& {D_8} & {} && 16 \\
	{} & {} & {K_4} & {C_4} & {K_4} & {} & {} \\
	& {C_2} & {C_2} & {C_2} & {C_2} & {C_2} && {} \\
	{} &&& 1 &&& 8 \\
	&& {} && {}
	\arrow[no head, from=4-3, to=3-4]
	\arrow[no head, from=4-5, to=3-4]
	\arrow[no head, from=5-4, to=4-3]
	\arrow[no head, from=5-4, to=4-5]
	\arrow["{m_G}", dashed, from=1-4, to=1-7]
	\arrow[no head, from=5-2, to=6-4]
	\arrow[no head, from=5-3, to=6-4]
	\arrow[""{name=0, anchor=center, inner sep=0}, no head, from=5-4, to=6-4]
	\arrow[no head, from=5-5, to=6-4]
	\arrow[no head, from=5-6, to=6-4]
	\arrow[no head, from=5-2, to=4-3]
	\arrow[no head, from=5-3, to=4-3]
	\arrow[no head, from=5-4, to=4-4]
	\arrow[no head, from=5-5, to=4-5]
	\arrow[no head, from=5-6, to=4-5]
	\arrow[""{name=1, anchor=center, inner sep=0}, draw=none, from=4-2, to=4-3]
	\arrow[""{name=2, anchor=center, inner sep=0}, draw=none, from=4-5, to=4-6]
	\arrow[draw=none, from=4-7, to=6-7]
	\arrow[draw=none, from=4-7, to=6-7]
	\arrow[""{name=3, anchor=center, inner sep=0}, draw=none, from=5-6, to=5-8]
	\arrow[""{name=4, anchor=center, inner sep=0}, draw=none, from=7-3, to=7-5]
	\arrow[""{name=5, anchor=center, inner sep=0}, draw=none, from=4-1, to=6-1]
	\arrow[""{name=6, anchor=center, inner sep=0}, draw=none, from=2-3, to=3-5]
	\arrow[no head, from=4-4, to=3-4]
	\arrow[dashed, Rightarrow, no head, from=1, to=0]
	\arrow[dashed, Rightarrow, no head, from=2, to=0]
	\arrow[dashed, no head, from=3, to=2]
	\arrow[""{name=7, anchor=center, inner sep=0}, dashed, no head, from=4, to=3]
	\arrow[dashed, no head, from=5, to=4]
	\arrow[dashed, no head, from=1, to=5]
	\arrow[dashed, Rightarrow, no head, from=1, to=6]
	\arrow[""{name=8, anchor=center, inner sep=0}, dashed, Rightarrow, no head,  from=6, to=2]
	\arrow[shorten <=8pt, Rightarrow, dashed, from=7, to=6-7]
	\arrow[shorten <=6pt, Rightarrow, dashed, from=8, to=3-7]
\end{tikzcd}
    \caption{A diagram for $G=D_8$ showing the Chermak--Delgado measure $m_G:\mathcal{S}(G)\rightarrow \mathbb{Z}^+$. The two regions of the diagram are the fibers of $m_G$. The map $m_G$ is a monotone map of posets as well. } \label{fig: D8}
\end{figure}
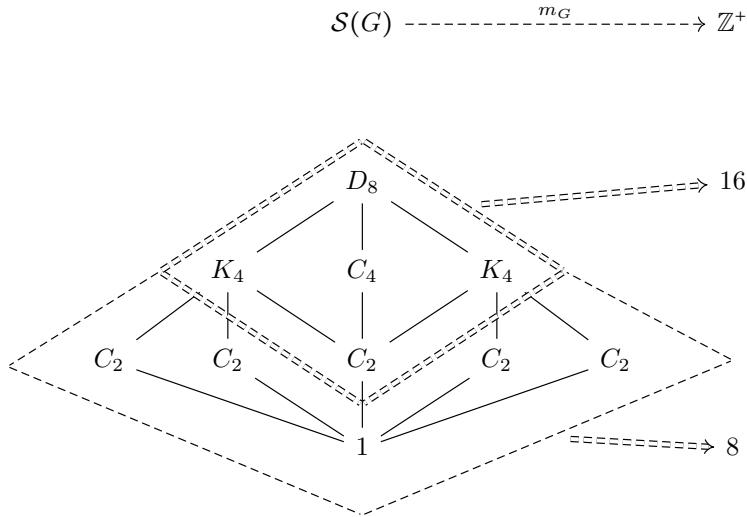

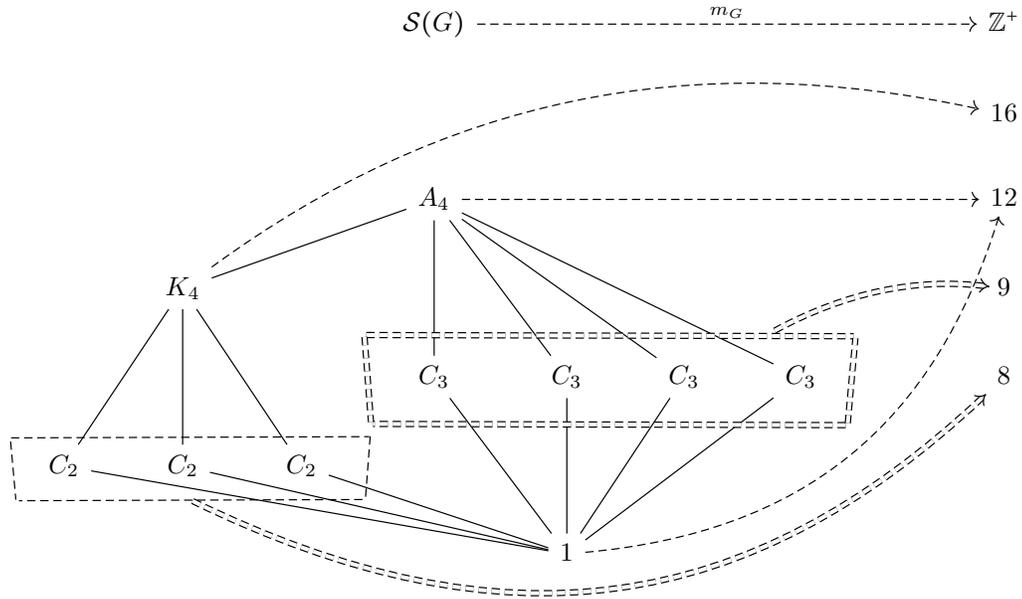
\begin{figure}
    \centering
\begin{tikzcd}
	&&&& {\mathcal{S}(G)} &&&&& {\mathbb{Z}^{+}} \\
	&&&&&&&&& 16 \\
	&&&& {A_4} &&&&& 12 \\
	&& {K_4} & {} & {} && {} & {} && 9 \\
	& {} && {} & {C_3} & {C_3} & {C_3} & {C_3} & {} & 8 \\
	{} & {C_2} & {C_2} & {C_2} & {} &&& {} \\
	& {} && {} && 1
	\arrow[no head, from=4-3, to=3-5]
	\arrow[no head, from=5-6, to=3-5]
	\arrow[no head, from=7-6, to=5-6]
	\arrow[dashed, from=3-5, to=3-10]
	\arrow[curve={height=50pt}, dashed, from=7-6, to=3-10]
	\arrow["{m_G}", dashed, from=1-5, to=1-10]
	\arrow[curve={height=-50pt}, dashed, from=4-3, to=2-10]
	\arrow[no head, from=4-3, to=6-2]
	\arrow[no head, from=4-3, to=6-3]
	\arrow[no head, from=4-3, to=6-4]
	\arrow[no head, from=6-2, to=7-6]
	\arrow[no head, from=6-3, to=7-6]
	\arrow[no head, from=6-4, to=7-6]
	\arrow[no head, from=5-5, to=7-6]
	\arrow[no head, from=5-7, to=7-6]
	\arrow[no head, from=5-8, to=7-6]
	\arrow[no head, from=3-5, to=5-5]
	\arrow[no head, from=5-7, to=3-5]
	\arrow[no head, from=5-8, to=3-5]
	\arrow[""{name=0, anchor=center, inner sep=0}, draw=none, from=5-2, to=6-1]
	\arrow[""{name=1, anchor=center, inner sep=0}, draw=none, from=6-1, to=7-2]
	\arrow[""{name=2, anchor=center, inner sep=0}, draw=none, from=5-4, to=6-5]
	\arrow[draw=none, from=4-5, to=4-4]
	\arrow[""{name=3, anchor=center, inner sep=0}, draw=none, from=4-5, to=5-4]
	\arrow[""{name=4, anchor=center, inner sep=0}, draw=none, from=5-9, to=4-8]
	\arrow[""{name=5, anchor=center, inner sep=0}, draw=none, from=5-9, to=6-8]
	\arrow[""{name=6, anchor=center, inner sep=0}, draw=none, from=6-5, to=7-4]
	\arrow[""{name=7, anchor=center, inner sep=0}, draw=none, from=4-7, to=5-7]
	\arrow[shorten <=5.5pt, shorten >=4.5pt, dashed, no head, from=0, to=1]
	\arrow[shift right=2, dashed, no head, from=0, to=2]
	\arrow[Rightarrow, dashed, no head, from=3, to=4]
	\arrow[Rightarrow, dashed, no head, from=3, to=2]
	\arrow[Rightarrow, dashed, no head, from=5, to=2]
	\arrow[""{name=8, anchor=center, inner sep=0}, shift right=2, dashed, no head, from=6, to=1]
	\arrow[shorten <=4pt, shorten >=3.5pt, dashed, no head, from=2, to=6]
	\arrow[Rightarrow, dashed, no head, from=5, to=4]
	\arrow[""{name=9, anchor=center, inner sep=0}, draw=none, from=7, to=4]
	\arrow[curve={height=75pt}, Rightarrow, dashed, from=8, to=5-10]
	\arrow[curve={height=-10pt}, shorten <=1.3pt, shorten >= -1.5pt, Rightarrow, dashed, from=9, to=4-10]
\end{tikzcd}
    \caption{A diagram for $G=A_4$ showing the Chermak--Delgado measure $m_G:\mathcal{S}(G)\rightarrow \mathbb{Z}^+$. We note that each copy of $C_3$ has measure $9$ and all of the copies of $C_2$ have measure 8. Since $1$ has measure 12, the map $m_G$ is not a monotone map of posets.} \label{fig: A4}
\end{figure}

Recall that there are two forms of monotone maps, increasing and decreasing. We show in Proposition \ref{prop: decreasing} that if $m_G$ is a decreasing function from $\mathcal{S}(G)$ to $G$, then $G=1$. Consequently, we restrict our attention to the case in which the Chermak--Delgado measure is increasing for the rest of the paper. 

Moreover, if $m_G$ is increasing, then the Chermak--Delgado lattice is exactly the interval of subgroups between $\mathbf{Z}(G)$ and $G$, i.e., $H\in \mathcal{CD}(G)$ if and only if $\mathbf{Z}(G) \leq H$ (and of course $H\leq G)$. We use the notation $[G/\mathbf{Z}(G)]$ for the interval of $\mathcal{S}(G)$ between $G$ and $\mathbf{Z}(G)$. The following theorem shows that the necessary conditions discovered by Fasol{\u{a}} \cite{Fas_24} are in fact sufficient. 

\begin{prevtheorem} \label{prev: increasing}
    Let $G$ be a finite group. Then the following are equivalent.
    \begin{enumerate}
        \item The function $m_G$ is increasing on $\mathcal{S}(G)$. 
        \item $m_G(H) = m_G(H\cap \mathbf{Z}(G))$ for all subgroups $H$ of $G$. 
        \item $\mathcal{CD}(G) = [G/\mathbf{Z}(G)]$.
    \end{enumerate} 
\end{prevtheorem}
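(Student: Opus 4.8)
The plan is to prove the three statements equivalent by running the cycle $(1)\Rightarrow(3)\Rightarrow(2)\Rightarrow(1)$, all built on a single computational identity. First I would record the two elementary facts that drive everything. Since $\mathbf{C}_G(\mathbf{Z}(G)) = G$ and $\mathbf{C}_G(G) = \mathbf{Z}(G)$, we have $m_G(\mathbf{Z}(G)) = m_G(G) = |G|\cdot|\mathbf{Z}(G)|$; and, for \emph{any} subgroup $H$, adjoining central elements does not change the centralizer, so $\mathbf{C}_G(H\mathbf{Z}(G)) = \mathbf{C}_G(H)$. Combining the latter with $|H\mathbf{Z}(G)| = |H|\cdot|\mathbf{Z}(G)|/|H\cap\mathbf{Z}(G)|$ yields the workhorse identity
\[
 m_G(H\mathbf{Z}(G)) = \frac{|\mathbf{Z}(G)|}{|H\cap\mathbf{Z}(G)|}\, m_G(H),
\]
which ties the measure of $H$ to that of the interval element $H\mathbf{Z}(G)\in[G/\mathbf{Z}(G)]$.

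For $(1)\Rightarrow(3)$, I would first note the general containment $\mathcal{CD}(G)\subseteq[G/\mathbf{Z}(G)]$: if $K\in\mathcal{CD}(G)$ then the identity gives $m_G(K\mathbf{Z}(G))\ge m_G(K)$, and since $m_G(K)$ is already maximal this forces $m_G(K\mathbf{Z}(G))=m_G(K)$, whence $|K\mathbf{Z}(G)|=|K|$ and $\mathbf{Z}(G)\le K$. (This step needs no monotonicity and may be cited if it is already available.) For the reverse containment I would invoke monotonicity: because $G$ is the top of $\mathcal{S}(G)$, increasingness makes $m_G(G)=|G|\cdot|\mathbf{Z}(G)|$ the global maximum, and for any $H\in[G/\mathbf{Z}(G)]$ the sandwich $m_G(\mathbf{Z}(G))\le m_G(H)\le m_G(G)$ collapses, since both ends equal $|G|\cdot|\mathbf{Z}(G)|$; hence $H\in\mathcal{CD}(G)$.

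For $(3)\Rightarrow(2)$, given a subgroup $H$ I would apply the workhorse identity to $H\mathbf{Z}(G)$, which now lies in $[G/\mathbf{Z}(G)]=\mathcal{CD}(G)$ and so shares the common maximal measure of members of $\mathcal{CD}(G)$; as $G\in\mathcal{CD}(G)$, that value is $m_G(G)=|G|\cdot|\mathbf{Z}(G)|$. Solving the identity for $m_G(H)$ then gives $m_G(H)=|G|\cdot|H\cap\mathbf{Z}(G)|$, and since $H\cap\mathbf{Z}(G)$ is central its centralizer is all of $G$, so $m_G(H\cap\mathbf{Z}(G))=|G|\cdot|H\cap\mathbf{Z}(G)|$ as well, proving $(2)$. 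Finally $(2)\Rightarrow(1)$ is immediate: for $A\le B$ we have $A\cap\mathbf{Z}(G)\le B\cap\mathbf{Z}(G)$, so $m_G(A)=|G|\cdot|A\cap\mathbf{Z}(G)|\le|G|\cdot|B\cap\mathbf{Z}(G)|=m_G(B)$.

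I expect the main obstacle to be the step showing that $m_G$ is \emph{constant} on the interval $[G/\mathbf{Z}(G)]$ at the global maximum value: everything hinges on the observation that $m_G(\mathbf{Z}(G))=m_G(G)$ are already the two extreme values under monotonicity, which is exactly what makes the sandwich collapse and pins the maximum as $|G|\cdot|\mathbf{Z}(G)|$. The only other point needing care is the index bookkeeping in the workhorse identity, namely keeping $\mathbf{C}_G(H\mathbf{Z}(G))=\mathbf{C}_G(H)$ and the order formula for $|H\mathbf{Z}(G)|$ straight; once that identity is secured, each of the three implications is short.
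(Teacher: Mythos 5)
Your proof is correct, but it runs a different cycle than the paper and is more self-contained. The paper proves $(1)\Rightarrow(2)\Rightarrow(3)\Rightarrow(1)$, and crucially it does \emph{not} prove $(1)\Rightarrow(2)$ itself: that implication is outsourced to Lemma 2.3 of Fasol\u{a}'s paper, with only $(2)\Rightarrow(3)$ and $(3)\Rightarrow(1)$ argued in-house. You instead run $(1)\Rightarrow(3)\Rightarrow(2)\Rightarrow(1)$, and your $(1)\Rightarrow(3)$ step (the collapse of the sandwich $m_G(\mathbf{Z}(G))\le m_G(H)\le m_G(G)$ once monotonicity pins $m_G(G)=|G|\cdot|\mathbf{Z}(G)|$ as the global maximum, combined with the standard fact that members of $\mathcal{CD}(G)$ contain the center, which you also re-derive) replaces the external citation entirely, so your argument is fully self-contained where the paper's is not. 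The computational core is the same in both: your ``workhorse identity'' $m_G(H\mathbf{Z}(G)) = \frac{|\mathbf{Z}(G)|}{|H\cap\mathbf{Z}(G)|}\,m_G(H)$ is exactly the identity the paper manipulates in its $(3)\Rightarrow(1)$ step, though you exploit it more systematically: under hypothesis $(3)$ you solve it to get the closed form $m_G(H) = |G|\cdot|H\cap\mathbf{Z}(G)|$, which makes both $(3)\Rightarrow(2)$ and $(2)\Rightarrow(1)$ one-line consequences, whereas the paper proves $(3)\Rightarrow(1)$ by comparing the identity at $H$ and at $K$ and cancelling. What the paper's route buys is brevity by leaning on the literature; what yours buys is independence from Fasol\u{a}'s lemma and a cleaner structural picture, since the explicit formula $m_G(H)=|G|\cdot|H\cap\mathbf{Z}(G)|$ makes the monotonicity conclusion transparent rather than emerging from an inequality comparison.
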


Of note, since the Chermak--Delgado lattice is self-dual under taking centralizers, if $\mathcal{CD}(G) = [G/\mathbf{Z}(G)]$, then all of the subgroups containing $\mathbf{Z}(G)$ are centralizers. We write $\mathcal{C}(G)$ for the set of subgroups of $G$ that are centralizers in $G$ (which is a lattice). Theorem \ref{prev: increasing} implies that if $m_G$ is increasing then $\CD(G) = \mathcal{C}(G)$.

Moreover, if we relax this condition to considering $m_G$ as a function not from $\mathcal{S}(G)$ but from $\mathcal{C}(G)$, we obtain a similar result. As an example of a sort of linguistic duality, we cannot help but point out that \emph{relaxing} the condition on $m_G$ to only be increasing on $\mathcal{C}(G)$ is equivalent to \emph{restricting} the domain of $m_G$. 

\begin{prevtheorem}\label{prev: centralizer_increasing}
Suppose $G$ is a finite group.  Then $m_G$ is increasing on $\mathcal{C}(G)$ if and only if $\mathcal{C}(G) = \CD(G)$, and in such case the group $G$ is nilpotent.
\end{prevtheorem}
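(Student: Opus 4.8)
The plan is to prove the stated equivalence first and then extract nilpotency from the resulting structural constraints. For the equivalence I would begin with the two cheap observations that $\mathbf{Z}(G) = \mathbf{C}_G(G)$ and $G = \mathbf{C}_G(1)$ are both centralizers, so $\mathbf{Z}(G)$ and $G$ are the least and greatest elements of the lattice $\mathcal{C}(G)$, and that $m_G(\mathbf{Z}(G)) = |\mathbf{Z}(G)|\,|G| = m_G(G)$ since $\mathbf{C}_G(\mathbf{Z}(G)) = G$. For the direction ($\Rightarrow$): if $m_G$ is increasing on $\mathcal{C}(G)$, then for every centralizer $C$ we have $\mathbf{Z}(G) \le C \le G$ in $\mathcal{C}(G)$, whence $m_G(\mathbf{Z}(G)) \le m_G(C) \le m_G(G)$; as the two ends agree, $m_G(C) = |G|\,|\mathbf{Z}(G)|$ for every $C \in \mathcal{C}(G)$. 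Because $\CD(G)$ is nonempty and $\CD(G) \subseteq \mathcal{C}(G)$ (every Chermak--Delgado subgroup equals its own double centralizer, by the self-duality of $\CD(G)$ under $\mathbf{C}_G$), this common value is exactly the maximum $m^\ast(G)$; thus every centralizer attains the maximum, so $\mathcal{C}(G) \subseteq \CD(G)$, and with the reverse containment we get $\mathcal{C}(G) = \CD(G)$. The direction ($\Leftarrow$) is immediate: if $\mathcal{C}(G) = \CD(G)$ then $m_G$ is constantly $m^\ast(G)$ on $\mathcal{C}(G)$ and hence increasing.

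For nilpotency I would first convert $\mathcal{C}(G) = \CD(G)$ into two usable facts. The computation above gives $m^\ast(G) = |G|\,|\mathbf{Z}(G)|$, so that
\[
|H|\cdot|\mathbf{C}_G(H)| \le |G|\cdot|\mathbf{Z}(G)| \qquad \text{for every } H \le G,
\]
with equality precisely when $H$ is a centralizer. Applying this to a maximal abelian subgroup $A$, which is self-centralizing and therefore a centralizer, yields $|A|^2 = |G|\,|\mathbf{Z}(G)|$; hence every maximal abelian subgroup of $G$ has the single common order $\sqrt{|G|\,|\mathbf{Z}(G)|}$. I would also record the standard submodular inequality $m_G(\langle H,K\rangle)\,m_G(H\cap K) \ge m_G(H)\,m_G(K)$ together with its equality conditions, namely $\langle H,K\rangle = HK$ and $\mathbf{C}_G(H)\mathbf{C}_G(K) = \mathbf{C}_G(H\cap K)$. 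Applied to two members of $\CD(G) = \mathcal{C}(G)$, whose measures both equal $m^\ast(G)$, it forces equality, so the product of any two centralizers is again a subgroup lying in $\CD(G)$, hence again a centralizer.

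To finish I would show that any two elements of coprime order commute, which is equivalent to $G$ being nilpotent. Given a $p$-element $x$ and a $p'$-element $y$, the centralizers $\mathbf{C}_G(x)$ and $\mathbf{C}_G(y)$ are CD-maximal, so by the equality conditions $\mathbf{C}_G(x)\mathbf{C}_G(y)$ is a subgroup and a centralizer; the aim is to exploit the coprimality of $x$ and $y$ inside this subgroup, together with the rigid common order $\sqrt{|G|\,|\mathbf{Z}(G)|}$ of maximal abelian subgroups, to force $[x,y] = 1$. The main obstacle, and the place where genuine work is required, is the centerless base case: one must show that $\mathbf{Z}(G) = 1$ together with $\mathcal{C}(G) = \CD(G)$ forces $G = 1$. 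Here $m^\ast(G) = |G|$ makes every nontrivial centralizer large (each $\mathbf{C}_G(g)$ pairs with the abelian subgroup $\mathbf{C}_G(\mathbf{C}_G(g))$ to a product of order exactly $|G|$) and every maximal abelian subgroup self-centralizing of order $\sqrt{|G|}$ with trivial total intersection; I expect to rule such a configuration out by arguing that it would make $G$ a Frobenius-type or nonabelian simple group incompatible with the uniform maximal-abelian order, and then to bootstrap the general statement off this base. I anticipate that closing this centerless case, rather than the equivalence, will be the crux.
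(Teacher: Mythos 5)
Your proof of the equivalence is correct and is essentially the paper's own argument: the paper's Corollary~\ref{cor: gen_inc}, applied with $\mathcal{P} = \mathcal{C}(G)$ and $C = \mathbf{Z}(G)$, is precisely your sandwich $m_G(\mathbf{Z}(G)) \leq m_G(C) \leq m_G(G)$ with equal endpoints, followed by the observation that the nonempty set $\CD(G)$ lies inside $\mathcal{C}(G)$, so the common value $|G|\cdot|\mathbf{Z}(G)|$ is the maximum of $m_G$ and every centralizer attains it; the converse direction is the same one-line constancy argument in both.

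The genuine gap is the nilpotency claim: you never prove it. Your plan reduces it to two assertions that you explicitly leave open --- that the equality conditions in the submodular inequality can be leveraged to make elements of coprime order commute, and the ``centerless base case'' that $\mathbf{Z}(G)=1$ together with $\mathcal{C}(G)=\CD(G)$ forces $G=1$, which you propose to settle by excluding Frobenius-type and nonabelian simple configurations. As written this is a research program rather than an argument, and the route you sketch risks requiring classification-strength input. The paper instead obtains nilpotency in two short steps from subnormality: by Brewster--Wilcox (Proposition~\ref{prop: CD_Sn}), $\CD(G) \subseteq \mathcal{S}\textit{n}(G)$, so the hypothesis $\mathcal{C}(G) = \CD(G)$ makes every centralizer subnormal; then for each $x \in G$ one has $\langle x \rangle \unlhd \mathbf{C}_G(x)$ with $\mathbf{C}_G(x)$ subnormal, hence $\langle x \rangle$ is subnormal in $G$, and by Wielandt's join theorem (Proposition~\ref{prop: sub-join}) every subgroup $H = \langle\, \langle x \rangle \, : \, x \in H \,\rangle$ is subnormal, which forces $G$ nilpotent (Proposition~\ref{prop: sub-nilp}). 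Note also that once nilpotency is in hand, your ``crux'' evaporates: a nontrivial nilpotent group has nontrivial center, so the centerless case is a trivial corollary of the theorem, not a stepping stone toward it --- a sign that the difficulty you anticipated is an artifact of the chosen route rather than of the statement.
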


We note that since all centralizers of $G$ contain $\mathbf{Z}(G)$, the identification of $\CD(G)$ in Theorems \ref{prev: increasing} and \ref{prev: centralizer_increasing} are both specific instances of the following.
\begin{prevtheorem}\label{prev: main}
    Let $G$ be a finite group. If the Chermak--Delgado measure $m_G$ is increasing on a poset $\mathcal{P}$ of subgroups of $G$ such that \begin{enumerate}
        \item $\mathbf{Z}(G) \in \mathcal{P}$;
        \item $G\in \mathcal{P}$;
        \item $\mathcal{P} \cap \CD(G) \neq \varnothing$;
   \end{enumerate}
    then $\CD(G) \cap \mathcal{P} = \{H \in \mathcal{P} : \mathbf{Z}(G) \leq H.\}$ 
\end{prevtheorem}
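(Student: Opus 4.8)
The plan is to prove the two inclusions separately, extracting first an unconditional fact about $\CD(G)$ and then using the three hypotheses only for the ``increasing'' direction. Two preliminary observations do most of the work. First, the numerical identity $m_G(\mathbf{Z}(G)) = m_G(G)$ holds for every finite group: since $\mathbf{C}_G(\mathbf{Z}(G)) = G$ and $\mathbf{C}_G(G) = \mathbf{Z}(G)$, both measures equal $|G|\cdot|\mathbf{Z}(G)|$. Second, and this is the real engine, $\mathbf{Z}(G)$ is contained in the unique minimal member of $\CD(G)$, and hence in every member of $\CD(G)$. This follows from the self-duality of the Chermak--Delgado lattice recorded in \cite[Section 1.E]{FGT}: the map $H \mapsto \mathbf{C}_G(H)$ is an order-reversing involution of $\CD(G)$, so it carries the top element $T$ to the bottom element $L = \mathbf{C}_G(T)$; because $\mathbf{Z}(G)$ centralizes every subgroup, $\mathbf{Z}(G) \leq \mathbf{C}_G(T) = L \leq H$ for all $H \in \CD(G)$.

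This second observation immediately gives one inclusion: if $H \in \CD(G)\cap\mathcal{P}$, then $\mathbf{Z}(G) \leq H$, so $H \in \{H \in \mathcal{P} : \mathbf{Z}(G) \leq H\}$. Note that this direction uses none of the three hypotheses on $\mathcal{P}$; it is a feature of $\CD(G)$ alone.

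For the reverse inclusion I would first use hypotheses (2) and (3) to force $G$, and hence $\mathbf{Z}(G)$, into $\CD(G)$. Let $M$ be the maximal value of $m_G$ and pick $H_0 \in \mathcal{P}\cap\CD(G)$ by (3), so $m_G(H_0) = M$. Since $H_0 \leq G$ and $G \in \mathcal{P}$ by (2), monotonicity of $m_G$ on $\mathcal{P}$ gives $M = m_G(H_0) \leq m_G(G) \leq M$, whence $G \in \CD(G)$; the identity $m_G(\mathbf{Z}(G)) = m_G(G)$ then yields $\mathbf{Z}(G) \in \CD(G)$, i.e. $m_G(\mathbf{Z}(G)) = M$. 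Now take any $H \in \mathcal{P}$ with $\mathbf{Z}(G) \leq H$. Hypothesis (1) places $\mathbf{Z}(G)$ in $\mathcal{P}$, so monotonicity applies to the comparable pair $\mathbf{Z}(G) \leq H$ inside $\mathcal{P}$ and gives $M = m_G(\mathbf{Z}(G)) \leq m_G(H) \leq M$. Hence $m_G(H) = M$, so $H \in \CD(G)\cap\mathcal{P}$, completing the second inclusion and the proof.

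The step I expect to carry the weight is the observation that $\mathbf{Z}(G)$ sits below every Chermak--Delgado subgroup; once that and the identity $m_G(\mathbf{Z}(G)) = m_G(G)$ are in hand, the remainder is a short chase through the definition of ``increasing.'' In writing this up I would make the role of each hypothesis explicit, since dropping any one breaks the equality: (3) supplies a witness of the maximal value inside $\mathcal{P}$, (2) transfers that value onto $G$, and (1) makes $\mathbf{Z}(G)$ available as a comparison point within $\mathcal{P}$.
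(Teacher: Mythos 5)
Your proof is correct and follows essentially the same route as the paper, which obtains Theorem~C from its Corollary~\ref{cor: gen_inc} applied with $C=\mathbf{Z}(G)$ and $\mathbf{C}_G(C)=G$: both arguments rest on the identity $m_G(\mathbf{Z}(G))=m_G(G)$, a monotonicity squeeze between $\mathbf{Z}(G)$ and $G$, the witness from hypothesis (3) to pin the common value at the maximum, and the fact that every Chermak--Delgado subgroup contains the center. The only cosmetic differences are the order of steps (you pin $m_G(G)$ at the maximum first, the paper first proves constancy on the interval) and your use of self-duality of $\CD(G)$ where the paper uses $\CD(G)\subseteq\mathcal{C}(G)$ together with the fact that centralizers contain $\mathbf{Z}(G)$.
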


The paper proceeds as follows. In Section \ref{sec: back_mat} we present background material, including a discussion of Fasol{\u{a}}'s prior results, and general results we will use. Then in Section \ref{sec: proofs} we prove Theorem \ref{prev: increasing} and present a classification of finite groups with $m_G$ monotone on the full subgroup lattice. In Section \ref{sec: centralizers} we prove Theorem \ref{prev: centralizer_increasing} and investigate when $m_G$ is monotone on the centralizer lattice.

\section{Background.} \label{sec: back_mat}
In this section we gather some relevant facts about centralizers, the Chermak--Delgado lattice, and subnormal subgroups. We also discuss prior work on when the Chermak--Delgado measure is increasing (see Section \ref{sec: back}). 

\begin{proposition}\label{prop: basic_cent}
Recall the following facts about centralizers. Let $C \in \mathcal{C}(G)$, i.e., there exists a subgroup $H \leq G$ such that $C=\mathbf{C}_G(H)$. 
\begin{enumerate}
    \item The subgroup $C$ is the centralizer of its centralizer, i.e.,  $C = \mathbf{C}_G(\mathbf{C}_G(C))$.
    \item The subgroup $C$ and its centralizer have the same Chermak--Delgado measure, i.e., $m_G(C) = |C| \cdot |\mathbf{C}_G(C)| = |\mathbf{C}_G(C)| \cdot |\mathbf{C}_G(\mathbf{C}_G(C))| = m_G(\mathbf{C}_G(C))$.
\end{enumerate}
\end{proposition}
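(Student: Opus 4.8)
The plan is to establish part (1) as an instance of the standard Galois connection between subgroups and their centralizers, and then to derive part (2) by a one-line cardinality computation. The only facts I need are the two basic properties of the map $\mathbf{C}_G(\cdot)$ on $\mathcal{S}(G)$: it is inclusion-reversing (if $A \leq B$ then $\mathbf{C}_G(B) \leq \mathbf{C}_G(A)$), and it is inflationary after a single iteration, i.e. $A \leq \mathbf{C}_G(\mathbf{C}_G(A))$ for every subgroup $A$. The first is immediate from the definition, and the second holds because every element of $A$ commutes with every element of $\mathbf{C}_G(A)$. Together these say precisely that the double centralizer $A \mapsto \mathbf{C}_G(\mathbf{C}_G(A))$ is a closure operator.

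To prove part (1), I would write $C = \mathbf{C}_G(H)$. Applying the inflationary property to $A = C$ gives the inclusion $C \leq \mathbf{C}_G(\mathbf{C}_G(C))$. For the reverse inclusion, I apply the inflationary property instead to $A = H$, obtaining $H \leq \mathbf{C}_G(\mathbf{C}_G(H)) = \mathbf{C}_G(C)$, and then feed this inclusion $H \leq \mathbf{C}_G(C)$ into the inclusion-reversing property, which yields $\mathbf{C}_G(\mathbf{C}_G(C)) \leq \mathbf{C}_G(H) = C$. Combining the two inclusions gives $C = \mathbf{C}_G(\mathbf{C}_G(C))$, as desired. Note that this is exactly the statement that the subgroups in the image of $\mathbf{C}_G(\cdot)$, namely the members of $\mathcal{C}(G)$, are the closed elements of the closure operator.

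Part (2) then follows directly. By definition $m_G(C) = |C| \cdot |\mathbf{C}_G(C)|$ and $m_G(\mathbf{C}_G(C)) = |\mathbf{C}_G(C)| \cdot |\mathbf{C}_G(\mathbf{C}_G(C))|$, so substituting the identity from part (1) replaces the factor $|\mathbf{C}_G(\mathbf{C}_G(C))|$ by $|C|$ and the two products coincide. There is really no serious obstacle in this argument; the only point demanding care is keeping straight the direction of the inclusions in the Galois connection, so I would be explicit at each step about which map is being applied to which subgroup.
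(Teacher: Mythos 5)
Your proof is correct and complete: the paper states these facts without proof (they are recalled as standard), and your Galois-connection argument --- centralizing is inclusion-reversing, the double centralizer is inflationary, hence elements of $\mathcal{C}(G)$ are exactly the closed subgroups --- is precisely the canonical way to establish them, with part (2) following immediately from part (1) by substituting $|\mathbf{C}_G(\mathbf{C}_G(C))| = |C|$.
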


Note that for any subset $S$ of a group $G$, $\mathbf{C}_G(S) = \mathbf{C}_G(\langle S \rangle) \in \mathcal{C}(G)$, and so, in particular, for any element $x \in G$, the subgroup $\mathbf{C}_G(x) \in \mathcal{C}(G)$. 

\begin{proposition}[{\cite[Theorem 1.44]{FGT}}]
Let $G$ be a finite group.  Then $\CD(G) \subseteq \mathcal{C}(G)$.
\end{proposition}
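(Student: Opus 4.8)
The plan is to show that every member of the Chermak--Delgado lattice coincides with its own double centralizer, which immediately exhibits it as a centralizer. The engine is the elementary monotonicity inequality $m_G(H) \le m_G(\mathbf{C}_G(H))$, valid for every subgroup $H \le G$, combined with the defining maximality property of $\CD(G)$.

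First I would record the standard double-centralizer containment $H \le \mathbf{C}_G(\mathbf{C}_G(H))$, which holds for any subgroup (indeed any subset) since every element of $H$ centralizes every element of $\mathbf{C}_G(H)$. Passing to orders gives $|H| \le |\mathbf{C}_G(\mathbf{C}_G(H))|$, and multiplying both sides by $|\mathbf{C}_G(H)|$ yields, directly from the definition of the measure,
\[
  m_G(H) = |H|\cdot|\mathbf{C}_G(H)| \le |\mathbf{C}_G(H)|\cdot |\mathbf{C}_G(\mathbf{C}_G(H))| = m_G(\mathbf{C}_G(H)).
\]
This is the monotonicity statement I intend to exploit, and it is essentially the inequality underlying part (2) of Proposition \ref{prop: basic_cent}.

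Next I would fix $H \in \CD(G)$, so that $m_G(H)$ attains the maximum value of the measure over all subgroups of $G$. Since $\mathbf{C}_G(H)$ is itself a subgroup, the displayed inequality together with maximality forces $m_G(\mathbf{C}_G(H)) = m_G(H)$, i.e. equality holds throughout the chain. In particular $|\mathbf{C}_G(\mathbf{C}_G(H))| = |H|$; combined with $H \le \mathbf{C}_G(\mathbf{C}_G(H))$ and finiteness, this gives $H = \mathbf{C}_G(\mathbf{C}_G(H))$. Thus $H$ is the centralizer of the subgroup $\mathbf{C}_G(H)$, so $H \in \mathcal{C}(G)$, as required.

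I do not anticipate a genuine obstacle: the whole argument rests on the single inequality above and on the principle that, in a finite group, equality of orders for a containment $H \le \mathbf{C}_G(\mathbf{C}_G(H))$ upgrades to equality of subgroups. If anything, the crux is simply recognizing that the maximality of $m_G$ on $\CD(G)$ is exactly what converts the inequality into an equality; once that is observed, the identification of $H$ as a centralizer is immediate.
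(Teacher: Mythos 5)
Your proof is correct, and it is essentially the standard argument: the paper itself states this proposition without proof, citing Isaacs \cite[Theorem 1.44]{FGT}, and your chain of reasoning (the containment $H \le \mathbf{C}_G(\mathbf{C}_G(H))$, the resulting inequality $m_G(H) \le m_G(\mathbf{C}_G(H))$, and maximality forcing $H = \mathbf{C}_G(\mathbf{C}_G(H))$) is precisely the argument given there. Your observation also recovers part (1) of Proposition \ref{prop: basic_cent} for members of $\CD(G)$, so nothing is missing.
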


The next proposition will be used throughout the paper.  

\begin{proposition}
    Suppose $m_G$ is increasing on a subset $\mathcal{P}$ of $\mathcal{S}(G)$.  If an abelian centralizer $C \in \mathcal{P}$ and $\mathbf{C}_G(C) \in \mathcal{P}$, then $m_G$ is constant on $\{ X \in \mathcal{P} \,\, : \,\, C \leq X \leq \mathbf{C}_G(C) \} = [\mathbf{C}_G(C)/C]_{\mathcal{P}}$.
\end{proposition}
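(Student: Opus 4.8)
The plan is a short squeeze argument resting entirely on Proposition \ref{prop: basic_cent}(2). First I would record why the set in question is genuinely an interval with the two named endpoints. Since $C$ is abelian, every element of $C$ commutes with every element of $C$, so $C \leq \mathbf{C}_G(C)$. This is precisely where the abelian hypothesis is used: it guarantees $C \leq \mathbf{C}_G(C)$, so both $C$ and $\mathbf{C}_G(C)$ lie at the bottom and top of the interval $[\mathbf{C}_G(C)/C]_{\mathcal{P}}$, and the set $\{X \in \mathcal{P} : C \leq X \leq \mathbf{C}_G(C)\}$ is non-degenerate.

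Next I would invoke the key identity. Because $C$ is a centralizer, Proposition \ref{prop: basic_cent}(2) gives
$$m_G(C) = |C|\cdot|\mathbf{C}_G(C)| = m_G(\mathbf{C}_G(C)),$$
so the two endpoints of the interval carry exactly the same Chermak--Delgado measure. This is the one substantive ingredient of the proof.

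Finally I would squeeze. Fix any $X \in \mathcal{P}$ with $C \leq X \leq \mathbf{C}_G(C)$. Since $C, X, \mathbf{C}_G(C)$ all lie in $\mathcal{P}$ and $m_G$ is increasing on $\mathcal{P}$, monotonicity along the chain $C \leq X \leq \mathbf{C}_G(C)$ yields
$$m_G(C) \leq m_G(X) \leq m_G(\mathbf{C}_G(C)).$$
Combining this with the identity $m_G(C) = m_G(\mathbf{C}_G(C))$ forces $m_G(X) = m_G(C)$. As $X$ was arbitrary in the interval, $m_G$ is constant on $[\mathbf{C}_G(C)/C]_{\mathcal{P}}$.

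I do not expect a real obstacle here: the entire content sits in Proposition \ref{prop: basic_cent}(2), and the only point worth flagging explicitly is the role of the abelian hypothesis in ensuring $C \leq \mathbf{C}_G(C)$ so that the interval is meaningful. The one routine check is that comparability of $X$ with the endpoints transfers to $\mathcal{P}$-monotonicity, but this is immediate because $m_G$ is assumed increasing on all of $\mathcal{P}$ and all three subgroups belong to $\mathcal{P}$.
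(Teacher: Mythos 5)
Your proof is correct and is essentially the same squeeze argument as the paper's: the paper likewise fixes $X \in \mathcal{P}$ with $C \leq X \leq \mathbf{C}_G(C)$ and concludes $m_G(C) \leq m_G(X) \leq m_G(\mathbf{C}_G(C)) = m_G(C)$, relying implicitly on Proposition \ref{prop: basic_cent}(2). Your writeup just makes explicit the two facts the paper leaves tacit, namely that the abelian hypothesis gives $C \leq \mathbf{C}_G(C)$ and that the endpoint measures coincide because $C$ is a centralizer.
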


\begin{proof}
Let $X\in \mathcal{P}$ such that $C \leq X \leq \mathbf{C}_G(C)$.  Since $m_G$ is increasing, we have $m_G(C) \leq m_G(X) \leq m_G(\mathbf{C}_G(C))$ and thus $m_G(X) = m_G(C)$. 
\end{proof}

\begin{corollary}\label{cor: gen_inc}
    Suppose $m_G$ is increasing on a subset $\mathcal{P}$ of $\mathcal{S}(G)$.  If an abelian centralizer $C \in \mathcal{P}$ and $\mathbf{C}_G(C) \in \mathcal{P}$, and if $[\mathbf{C}_G(C)/C]_{\mathcal{P}} \cap \CD(G) \neq \emptyset$, then $[\mathbf{C}_G(C)/C]_{\mathcal{P}} \subseteq \CD(G)$.
\end{corollary}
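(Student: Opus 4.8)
The plan is to combine the preceding Proposition with the defining property of the Chermak--Delgado lattice as the fiber of $m_G$ over its maximum value. First I would invoke that Proposition directly: its hypotheses---that $C$ is an abelian centralizer lying in $\mathcal{P}$ and that $\mathbf{C}_G(C) \in \mathcal{P}$---are exactly the hypotheses supplied here, so it yields that $m_G$ is constant on the interval $[\mathbf{C}_G(C)/C]_{\mathcal{P}}$, with common value $m_G(C)$.

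Next I would recall that, because $G$ is finite, $m_G$ attains a maximum value $M$ over all of $\mathcal{S}(G)$, and that $\CD(G)$ consists precisely of those subgroups $H \leq G$ with $m_G(H) = M$. By the hypothesis $[\mathbf{C}_G(C)/C]_{\mathcal{P}} \cap \CD(G) \neq \emptyset$, there is some $Y \in [\mathbf{C}_G(C)/C]_{\mathcal{P}}$ with $Y \in \CD(G)$, and hence $m_G(Y) = M$. Applying the constancy from the first step, $m_G(X) = m_G(Y) = M$ for every $X \in [\mathbf{C}_G(C)/C]_{\mathcal{P}}$, so each such $X$ attains the maximum value of $m_G$ and therefore lies in $\CD(G)$. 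This gives $[\mathbf{C}_G(C)/C]_{\mathcal{P}} \subseteq \CD(G)$, as claimed.

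There is no serious obstacle here: the argument is essentially that membership in $\CD(G)$ is governed solely by the numerical value of $m_G$, so once $m_G$ is known to be constant on the interval, a single witness in $\CD(G)$ forces the entire interval into $\CD(G)$. The only points requiring care are verifying that the preceding Proposition applies verbatim (which it does) and noting explicitly that $\CD(G)$ is the level set of $m_G$ at its maximum, so that equal measure is equivalent to joint membership or joint non-membership in $\CD(G)$.
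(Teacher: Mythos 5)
Your proof is correct and is exactly the argument the paper intends (the paper states this as an immediate corollary of the preceding proposition without writing out a proof): constancy of $m_G$ on $[\mathbf{C}_G(C)/C]_{\mathcal{P}}$ from the proposition, combined with the fact that $\CD(G)$ is the fiber of $m_G$ over its maximum value, so one witness in $\CD(G)$ forces the whole interval into $\CD(G)$. Nothing is missing.
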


Let $\mathcal{S}\textit{n}(G)$ denote the collection of all subnormal subgroups of a group $G$.  Wielandt showed that for a finite group $G$, $\mathcal{S}\textit{n}(G)$ is a sublattice of $\mathcal{S}(G)$.

\begin{proposition}[Wielandt {\cite[Lemma 2.5]{FGT}}]\label{prop: sub-join}
 Let $G$ be a finite group and suppose that $S,T$ are subnormal subgroups of $G$.  Then $\langle S, T \rangle$ is subnormal in $G$.
\end{proposition}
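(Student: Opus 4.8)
The plan is to prove this by induction on $|G|$, via a minimal counterexample, exploiting the standard hereditary properties of subnormality together with the behaviour of normal closures. The facts I will use freely are: subnormality is transitive; a subgroup that is subnormal in $G$ is subnormal in every intermediate subgroup containing it; the image of a subnormal subgroup in a quotient $G/N$ is subnormal, and subnormal subgroups of $G/N$ lift to subnormal subgroups of $G$ containing $N$; and, crucially, the normal closure $S^{G}$ of a \emph{proper} subnormal subgroup $S$ is itself proper. This last fact is immediate: taking a subnormal chain $S = S_{0} \trianglelefteq \cdots \trianglelefteq S_{r} = G$ with $r \ge 1$, the term $S_{r-1} \trianglelefteq G$ is proper and contains $S$, so $S^{G} \le S_{r-1} < G$.

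Suppose the statement fails, and choose $G$ of least order admitting subnormal subgroups $S, T$ with $J := \langle S, T \rangle$ not subnormal in $G$. Since $G$ is subnormal in itself, $J < G$, and in particular $S, T < G$; set $N := S^{G}$, a proper normal subgroup of $G$. The first reduction is: if $T \le N$ then $J \le N$, and $S, T$ are both subnormal in $N$, so minimality of $G$ makes $J$ subnormal in $N \trianglelefteq G$, whence $J$ is subnormal in $G$ by transitivity, a contradiction. The second reduction is: $TN/N$ is the image of the subnormal subgroup $T$, hence subnormal in $G/N$, and so $TN$ is subnormal in $G$; if $TN < G$ then $S \le N \le TN$ and $T \le TN$ are subnormal in $TN$, minimality makes $J$ subnormal in $TN$, and transitivity again yields the contradiction. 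Running both reductions, together with their mirror images obtained by swapping $S$ and $T$ and replacing $N$ by $T^{G}$, forces the ``crux case'': $T\,S^{G} = G = S\,T^{G}$, with neither of $S, T$ contained in the other's normal closure.

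The main obstacle is precisely this crux case, which is the genuine content of Wielandt's theorem. Here the plan is to choose a maximal subgroup $M$ with $J \le M$, so that $S \le M$ and $T \le M$, and to reach a contradiction by showing $S^{G} \le M$: for then $T\,S^{G} \le M < G$, against $T\,S^{G} = G$. The tool that would deliver $S^{G} \le M$ is a lemma of Wielandt (the ``Zipper Lemma''), to the effect that a subnormal subgroup lying in a \emph{unique} maximal subgroup $M$ has its normal closure contained in $M$. Thus the remaining and hardest point is to meet the hypothesis of that lemma, namely to show that in the minimal counterexample $S$ lies in a unique maximal subgroup (equivalently, to establish the Zipper Lemma itself, whose proof is a separate induction that successively ``zips'' a putative second maximal subgroup against the normal closure). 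Once the crux is dispatched in this way, the minimal counterexample cannot exist, and $\langle S, T \rangle$ is subnormal in $G$ as claimed.
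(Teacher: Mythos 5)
The paper does not prove this proposition at all: it is Wielandt's join theorem, quoted from Isaacs \cite[Lemma 2.5]{FGT}, so your attempt has to stand on its own. Its first half does: in a minimal counterexample with $J = \langle S, T\rangle$ not subnormal in $G$, your reductions are correct. Indeed $J < G$, the normal closures $S^G$ and $T^G$ are proper, no proper subnormal subgroup of $G$ can contain $J$ (by minimality plus transitivity), and hence $TS^G = G = ST^G$. This matches the opening of every standard treatment; it is the easy part of the theorem.

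The gap is the crux case, and your appeal to the Zipper Lemma cannot close it. First, you have misstated the lemma: the Zipper Lemma (Isaacs, Theorem 2.9) says that if $S$ is subnormal in every \emph{proper} subgroup of $G$ containing it but is \emph{not} subnormal in $G$, then $S$ lies in a unique maximal subgroup of $G$. The statement you use instead --- a subnormal subgroup lying in a unique maximal subgroup $M$ has $S^G \leq M$ --- is a different assertion; in fact it is an immediate consequence of your own observation that $S^G < G$ (any maximal subgroup over $S^G$ contains $S$, hence equals the unique one), so it carries no force of its own. Second, and more importantly, the real Zipper Lemma cannot be applied to $S$ at all: $S$ \emph{is} subnormal in $G$ by hypothesis, so the lemma's hypotheses fail for $S$, and there is no reason whatsoever for $S$ to lie in a unique maximal subgroup of $G$; you give no argument for this and concede it is ``the remaining and hardest point.'' The object to which the Zipper Lemma genuinely applies in your setup is $J$: it is not subnormal in $G$, and by minimality it is subnormal in every proper subgroup containing it, so $J$ lies in a unique maximal subgroup $M$. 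But even granting that, the contradiction still does not follow: one would need, say, $J^g \leq M$ for every $g \in G$, so that $J^G \leq M < G$ becomes a proper normal subgroup containing $J$, and nothing in your sketch produces this. That missing step is the actual content of Wielandt's theorem; as written, your proposal establishes only the routine reductions and defers the theorem itself to a lemma that, in the form you state and apply it, does not do the job.
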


Brewster \& Wilcox in \cite{Bre_12} showed that subgroups in $\mathcal{CD}(G)$ are subnormal, see also \cite{Coc_20}.

\begin{proposition}\label{prop: CD_Sn}
Let $G$ be a finite group.  Then $\CD(G) \subseteq \mathcal{S}\textit{n}(G)$.
\end{proposition}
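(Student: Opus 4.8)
The plan is to prove the stronger structural fact that $\CD(G)$ is closed not merely under the abstract lattice operations but under the \emph{group-theoretic} join and intersection, and then to feed this into Wielandt's Proposition~\ref{prop: sub-join} via an induction on $|G|$. The heart of the argument is a closure lemma, which I would establish first. For arbitrary $A,B\le G$ one has the Chermak--Delgado inequality
\[
m_G(A)\,m_G(B)\;\le\; m_G(A\cap B)\,m_G(\langle A,B\rangle),
\]
which I would derive from the definition using the elementary facts $\mathbf{C}_G(\langle A,B\rangle)=\mathbf{C}_G(A)\cap\mathbf{C}_G(B)$ and $\mathbf{C}_G(A)\mathbf{C}_G(B)\subseteq\mathbf{C}_G(A\cap B)$, together with $|\langle A,B\rangle|\ge |A|\,|B|/|A\cap B|$; multiplying the resulting order estimates collapses the right-hand side to $m_G(A)m_G(B)$. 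If now $A,B\in\CD(G)$ both attain the maximal value $\mu$, the left side equals $\mu^2$ while each factor on the right is at most $\mu$, forcing $m_G(A\cap B)=m_G(\langle A,B\rangle)=\mu$. Hence $\CD(G)$ is closed under group intersection and group join; it is trivially closed under conjugation since $m_G$ is conjugation-invariant.

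With this in hand I would induct on $|G|$. Fix $H\in\CD(G)$. By Proposition~\ref{prop: basic_cent}(2) its centralizer $\mathbf{C}_G(H)$ also lies in $\CD(G)$, so $W_0:=\langle H,\mathbf{C}_G(H)\rangle\in\CD(G)$ by the closure lemma, and its normal closure $W:=W_0^{\,G}$, being an iterated join of conjugates of a $\CD$-member, again lies in $\CD(G)$ and is normal in $G$. The purpose of inflating to $W$ is that $\mathbf{C}_G(H)\le W_0\le W$, so $\mathbf{C}_W(H)=\mathbf{C}_G(H)$ and therefore $m_W(H)=m_G(H)=\mu$; since $m_W(X)\le m_G(X)\le\mu$ for every $X\le W$, this shows $H\in\CD(W)$. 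If $W<G$, the inductive hypothesis gives $H\in\mathcal{S}\textit{n}(W)$, and because $W\trianglelefteq G$ a subnormal subgroup of $W$ is subnormal in $G$; this closes the induction.

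The main obstacle is the residual case $W=G$, i.e.\ when the normal closure of $\langle H,\mathbf{C}_G(H)\rangle$ is all of $G$; equivalently $G\in\CD(G)$, so that $\mu=|G|\,|\mathbf{Z}(G)|$ and the top of the Chermak--Delgado lattice is $G$ itself. Here the reduction stalls because there is no proper normal $\CD$-subgroup to descend into. One genuine piece of progress survives: since $\mathbf{C}_G(H)$ centralizes $H$, the product $H\,\mathbf{C}_G(H)$ is a subgroup normalizing $H$, so $H\trianglelefteq H\,\mathbf{C}_G(H)$, the first link of a normal chain; iterating $H_{i+1}=H_i\,\mathbf{C}_G(H_i)$ produces an ascending chain of $\CD$-members, each normal in the next, that stabilizes at a member equal to its own double centralizer. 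I expect the decisive difficulty to be showing that this chain actually reaches $G$ (equivalently, handling a ``self-paired'' member lying strictly below the top), which will likely require passing to the central quotient $G/\mathbf{Z}(G)$ or a direct analysis of this extremal configuration — precisely the case treated in \cite{Bre_12, Coc_20}.
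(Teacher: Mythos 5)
Your closure lemma and the first half of the induction are correct: $\CD(G)$ is closed under join, intersection, and conjugation, so $W_0=\langle H,\mathbf{C}_G(H)\rangle$ and its normal closure $W=W_0^{\,G}$ lie in $\CD(G)$; the identity $\mathbf{C}_W(H)=\mathbf{C}_G(H)$ does place $H$ in $\CD(W)$; and the descent when $W<G$ is valid. (For the record, the paper does not prove this proposition at all --- it quotes it from \cite{Bre_12}, see also \cite{Coc_20} --- so the comparison is against those published arguments.) The problem is that your residual case is not a residue: it \emph{is} the theorem. What your induction reduces the statement to is precisely the claim that a proper member of $\CD(G)$ has proper normal closure in $G$, and that claim is equivalent to the subnormality conclusion itself (a proper subnormal subgroup always has proper normal closure), so no content has been removed. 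Your fallback chain $H_{i+1}=H_i\,\mathbf{C}_G(H_i)$ cannot close it: the chain stabilizes as soon as $\mathbf{C}_G(H_i)\leq H_i$, i.e.\ at the first member containing its own centralizer (note that ``equal to its own double centralizer'' is vacuous here, since \emph{every} member of $\CD(G)$ equals its double centralizer), and such self-centralizing members --- for instance the maximal abelian atoms $A_i$ appearing in the quasi-antichain examples at the end of this paper --- occur far below $G$, with no reason to be normal in $G$. So in the case $W=G$ you have produced neither a subnormal chain from $H$ to $G$ nor an argument that one exists.

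The missing idea is, ironically, already inside your closure lemma. When $A,B\in\CD(G)$, equality must hold throughout your inequality, which forces not only $m_G(A\cap B)=m_G(\langle A,B\rangle)=\mu$ but also $|\langle A,B\rangle|=|A|\,|B|/|A\cap B|=|AB|$; that is, $AB=\langle A,B\rangle$, so distinct members of $\CD(G)$ \emph{permute}. Taking $B=H^g$ shows that every $H\in\CD(G)$ is conjugate-permutable ($HH^g=H^gH$ for all $g\in G$), and this is exactly how Brewster and Wilcox \cite{Bre_12} conclude: they invoke Foguel's theorem that conjugate-permutable subgroups of a finite group are subnormal. Foguel's proof begins just like yours (if $HH^g=G$ then $g\in H$, so $HH^g<G$ and induction gives $H$ subnormal in $\langle H,H^g\rangle$ for every $g$), but it then appeals to Wielandt's subnormality criterion --- if $H$ is subnormal in $\langle H,H^g\rangle$ for all $g\in G$, then $H$ is subnormal in $G$ --- to finish. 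That deep input (or a substitute for it, as in the alternative treatment in \cite{Coc_20}) is what your proposal lacks, and it cannot be replaced by further elementary manipulation of the measure; without it the extremal configuration you isolate remains exactly as hard as the original statement.
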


\subsection{Previous work on when \texorpdfstring{$m_G$}{mG} is increasing} \label{sec: back}
This work was directly inspired by the 2024 paper of Georgiana Fasol{\u{a}} \cite{Fas_24}.  
 Fasol{\u{a}} examined the question of when subgroups with minimal Chermak--Delgado measure form a sublattice. Her partial answers to this question relied on two additional  hypotheses: 
\begin{enumerate}
    \item A bound on the number of values taken by the Chermak--Delgado measure $m_G$.
    \item Assuming that the Chermak--Delgado measure $m_G$ is increasing, i.e., $H\leq K \Rightarrow m_G(H) \leq m_G(K)$. \label{list: increasing}
\end{enumerate}

Of note, Fasol{\u{a}} \cite{Fas_24} showed that $m_G$ is increasing implies that $G$ is nilpotent. She also proved that $(1)$ implies $(2)$ and $(3)$ in Theorem \ref{prev: increasing}. 

\section{Monotone Chermak--Delgado measure on all subgroups.} \label{sec: proofs}

In this section, we prove Theorem \ref{prev: increasing}, which states that the function $m_G$ is increasing on the subgroup lattice if and only if $\mathcal{CD}(G) = [G/\mathbf{Z}(G)]$. After proving Theorem \ref{prev: increasing}, we then collect results showing when this condition is satisfied. 

We start by showing that  the Chermak--Delgado measure is decreasing only on the trivial group. 

\begin{proposition}\label{prop: decreasing}
    Let $G$ be a finite group. If the Chermak--Delgado measure $m_G$ on $G$ satisfies $K \leq H \Rightarrow m_G(H) \leq m_G(K)$, then $G=1$. 
\end{proposition}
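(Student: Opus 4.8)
The plan is to show that the hypothesis forces $m_G$ to be \emph{constant}, equal to $|G|$, on all of $\mathcal{S}(G)$, and then to extract a contradiction from this unless $G$ is trivial.

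First I would pin down the two endpoints of the lattice. Since $1 \leq H \leq G$ for every subgroup $H$, applying the decreasing hypothesis to the comparabilities $1 \leq H$ and $H \leq G$ yields
\[
m_G(G) \leq m_G(H) \leq m_G(1) \qquad \text{for all } H \in \mathcal{S}(G).
\]
Now $m_G(1) = |1|\cdot|\mathbf{C}_G(1)| = |G|$, while $m_G(G) = |G|\cdot|\mathbf{Z}(G)| \geq |G|$. Comparing these with the instance $m_G(G) \leq m_G(1)$ of the hypothesis (coming from $1 \leq G$) forces $|\mathbf{Z}(G)| = 1$, and hence $m_G(G) = m_G(1) = |G|$. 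The displayed squeeze then collapses to $m_G(H) = |G|$ for every subgroup $H$; in particular $m_G$ is constant and $\mathbf{Z}(G) = 1$.

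With constancy in hand, the main step is a short counting contradiction. Suppose $G \neq 1$, let $p$ be a prime dividing $|G|$, and let $P$ be a Sylow $p$-subgroup. Since $P$ is a nontrivial $p$-group, $\mathbf{Z}(P) \neq 1$, so I may choose $z \in \mathbf{Z}(P)$ of order $p^a$ with $a \geq 1$. On one hand, constancy applied to $\langle z \rangle$ gives $|\langle z \rangle|\cdot|\mathbf{C}_G(\langle z \rangle)| = |G|$, so $[G : \mathbf{C}_G(z)]$ equals the order of $z$, namely $p^a$, which is divisible by $p$. On the other hand $P$ centralizes $z$, so $P \leq \mathbf{C}_G(z)$ and therefore the full $p$-part of $|G|$ divides $|\mathbf{C}_G(z)|$; thus $[G : \mathbf{C}_G(z)]$ is coprime to $p$. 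These are incompatible, so no prime divides $|G|$ and $G = 1$.

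The one genuinely load-bearing observation is the reduction in the first two paragraphs: once one notices that a \emph{decreasing} $m_G$ is squeezed between its values at the top and bottom of $\mathcal{S}(G)$, and that those values are forced to coincide, the problem becomes entirely about a group whose Chermak--Delgado measure is identically $|G|$, and the remainder is routine. I expect the Sylow--center argument to be the cleanest finish. Alternatively, one could observe that constancy makes $\CD(G) = \mathcal{S}(G)$, so that by Proposition \ref{prop: CD_Sn} every subgroup of $G$ is subnormal; since a subnormal Sylow subgroup is normal, $G$ would then be nilpotent, and a nontrivial finite nilpotent group has nontrivial center, again contradicting $\mathbf{Z}(G) = 1$.
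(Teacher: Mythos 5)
Your proposal is correct and takes essentially the same route as the paper: the squeeze $m_G(G) \leq m_G(H) \leq m_G(1)$ forces $\mathbf{Z}(G) = 1$ and constancy of $m_G$, and then a Sylow-plus-nontrivial-center argument rules out any nontrivial group. The only cosmetic difference is in the finish — the paper applies constancy to a nontrivial Sylow $p$-subgroup $S$ itself (noting $p$ divides $|\mathbf{C}_G(S)|$, so $m_G(S)$ has too large a $p$-part to equal $|G|$), whereas you apply it to $\langle z \rangle$ for $z$ a nontrivial central element of $P$ and count the index $[G:\mathbf{C}_G(z)]$ — an equivalent $p$-divisibility count.
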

\begin{proof}
    We note that $m_G(G) \leq m_G(1)$ implies that $\mathbf{Z}(G) =1$ and $m_G(G) = m_G(1)$.  Now for all $H\leq G$ we have  $m_G(G) \leq m_G(H)\leq m_G(1)$, and thus all elements of $\mathcal{S}(G)$ have the same Chermak--Delgado measure. The only group $G$ such that the image of $m_G$ has size $1$ is $1$. To see this, if $S$ is a nontrivial Sylow $p$-subgroup of $G$, we have that $m_G(S)\neq m_G(1)$, since $|\mathbf{C}_G(S)|$ is divisible by $p$. Therefore, the Sylow subgroups of $G$ are trivial and we conclude that $G=1$.   
\end{proof}

We now prove Theorem \ref{prev: increasing}. Since the theorem involved a few hypothesis, we restate it before giving the proof. 

\setcounter{prevtheorem}{0}
\begin{prevtheorem}
    Let $G$ be a finite group. Then the following are equivalent.
    \begin{enumerate}
        \item The function $m_G$ is increasing on $\mathcal{S}(G)$. 
        \item $m_G(H) = m_G(H\cap \mathbf{Z}(G))$ for all subgroups $H$ of $G$. 
        \item $\mathcal{CD}(G) = [G/\mathbf{Z}(G)]$.
    \end{enumerate}
\end{prevtheorem}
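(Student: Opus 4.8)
Writing $Z = \mathbf{Z}(G)$, the plan is to prove the cycle $(1)\Rightarrow(3)\Rightarrow(2)\Rightarrow(1)$ on the back of three elementary computations. First I would record that $\mathbf{C}_G(Z)=G$ and $\mathbf{C}_G(G)=Z$, so that $m_G(Z)=m_G(G)=|G|\,|Z|$; that $H\cap Z$ is central for every $H\le G$, so that $m_G(H\cap Z)=|G|\,|H\cap Z|$; and, crucially, the multiplicative identity
\[
 m_G(HZ)=\frac{|Z|}{|H\cap Z|}\,m_G(H),
\]
which holds because $HZ$ is a subgroup (as $Z\trianglelefteq G$), because every element centralizes $Z$ so that $\mathbf{C}_G(HZ)=\mathbf{C}_G(H)$, and because $|HZ|=|H|\,|Z|/|H\cap Z|$. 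This identity is the engine of the whole argument.

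Before touching the hypotheses I would extract from it a lemma true for every finite group: $\mathcal{CD}(G)\subseteq[G/Z]$. Indeed, if $H\in\mathcal{CD}(G)$ then $m_G(H)$ equals the maximal value $m^\ast$, so $m_G(HZ)\le m^\ast=m_G(H)$; the identity then forces $|Z|/|H\cap Z|\le 1$, and as $|H\cap Z|$ divides $|Z|$ this yields $H\cap Z=Z$, i.e.\ $Z\le H$. This containment is one half of $(3)$, obtained for free.

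The three implications then fall out quickly. For $(1)\Rightarrow(3)$: monotonicity gives $m_G(H)\le m_G(G)$ for all $H$, so $m^\ast=m_G(G)=|G|\,|Z|$, and since $m_G(Z)=m^\ast$ both $G,Z\in\mathcal{CD}(G)$; for $Z\le H\le G$ the sandwich $m^\ast=m_G(Z)\le m_G(H)\le m_G(G)=m^\ast$ gives $H\in\mathcal{CD}(G)$, so $[G/Z]\subseteq\mathcal{CD}(G)$, and with the lemma we obtain $(3)$. For $(3)\Rightarrow(2)$: now $Z\in\mathcal{CD}(G)$ so again $m^\ast=|G|\,|Z|$, and for arbitrary $H$ the subgroup $HZ$ lies in $[G/Z]=\mathcal{CD}(G)$, whence $m_G(HZ)=|G|\,|Z|$; substituting into the identity gives $m_G(H)=|G|\,|H\cap Z|=m_G(H\cap Z)$. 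Finally $(2)\Rightarrow(1)$ is immediate, since $K\le H$ gives $m_G(K)=|G|\,|K\cap Z|\le|G|\,|H\cap Z|=m_G(H)$.

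I do not anticipate a genuine obstacle: once the identity and the free containment $\mathcal{CD}(G)\subseteq[G/Z]$ are in place, each implication is a single maximality argument. The only thing to track carefully is the division of labour between the hypothesis-free facts (all three computations and the containment $\mathcal{CD}(G)\subseteq[G/Z]$) and the one place a hypothesis is really needed, namely the reverse containment $[G/Z]\subseteq\mathcal{CD}(G)$, which rests on $G\in\mathcal{CD}(G)$ and is supplied by monotonicity in $(1)$ and directly by $(3)$. As a sanity check I would confirm that neither the Chermak--Delgado inequality nor the lattice property of $\mathcal{CD}(G)$ is ever invoked in the argument above.
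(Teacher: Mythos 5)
Your proof is correct, and it is organized differently from the paper's. The paper proves the cycle $(1)\Rightarrow(2)\Rightarrow(3)\Rightarrow(1)$, outsourcing $(1)\Rightarrow(2)$ to Fasol\u{a}'s Lemma 2.3 and reserving the identity $m_G(HZ)=\tfrac{|Z|}{|H\cap Z|}\,m_G(H)$ (with $Z=\mathbf{Z}(G)$) for the implication $(3)\Rightarrow(1)$, where it is used to compare $m_G(H)$ and $m_G(K)$ for $H\le K$ via $m_G(HZ)=m_G(KZ)$. You instead run the cycle in the opposite direction, $(1)\Rightarrow(3)\Rightarrow(2)\Rightarrow(1)$, and make the same identity do all the work: it yields your hypothesis-free lemma $\mathcal{CD}(G)\subseteq[G/\mathbf{Z}(G)]$ (a standard fact, essentially in Isaacs' treatment, which the paper's terse $(2)\Rightarrow(3)$ step implicitly needs as well -- constancy of $m_G$ on $[G/\mathbf{Z}(G)]$ alone does not rule out maximizers outside that interval), and then each implication reduces to a one-line maximality or sandwich argument. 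What your route buys is self-containment -- no external citation, and a clean separation between what is true for every finite group and the single place each hypothesis enters; what the paper's route buys is brevity, by reusing an existing lemma and collapsing two implications into near-trivialities. Your final check is also accurate: neither argument needs the lattice property of $\mathcal{CD}(G)$, only the definition of the maximal measure.
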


\begin{proof}
    $(1) \rightarrow (2)$ is Lemma 2.3 in \cite{Fas_24}.  Note that if $m_G(H) = m_G(H\cap \mathbf{Z}(G))$ for all subgroups $H$ of $G$, then $m_G(H) = m_G(\mathbf{Z}(G))$ for all subgroups $H$ that contain $\mathbf{Z}(G)$. This implies that $m_G$ is constant on $[G/\mathbf{Z}(G)]$, and hence $\mathcal{CD}(G) = [G/\mathbf{Z}(G)]$.  Thus, $(2) \rightarrow (3)$.

    Now we show that $(3) \rightarrow (1)$.
    Suppose that $\mathcal{C}\mathcal{D}(G) = [G/\mathbf{Z}(G)]$. Let $H \leq K$ be two subgroups of $G$. We write $Z$ for $\mathbf{Z}(G)$. Then $\mathbf{C}_G(HZ) = \mathbf{C}_G(H)$ and $\mathbf{C}_G(KZ) = \mathbf{C}_G(K)$.  Since both $HZ$ and $ KZ$ are in $ \mathcal{C}\mathcal{D}(G)$, we know that $m_G(HZ) = |HZ||\mathbf{C}_G(H)| = |KZ||\mathbf{C}_G(K)| = m_G(KZ)$.  And so $$\frac{|H||Z|}{|H \cap Z|} |\mathbf{C}_G(H)| = \frac{|K||Z|}{|K \cap Z|} |\mathbf{C}_G(K)|.$$ Reordering terms, this is 
 \begin{equation}\tag{i}\label{eqn: inc 1} m_G(H) \cdot \frac{|Z|}{|H\cap Z|} = m_G(K) \cdot \frac{|Z|}{|K\cap Z|}.
 \end{equation}

 Since $H \leq K$, we have $H \cap Z \leq K \cap Z$. Thus  \begin{equation}\tag{ii}\label{eqn: inc 2} \frac{|Z|}{|H \cap Z|} \geq \frac{|Z|}{|K \cap Z|}.\end{equation} 

Combining equations \ref{eqn: inc 1} and \ref{eqn: inc 2} shows that $m_G(H) \leq m_G(K)$. 
\end{proof}

\begin{proposition}
    Let $G$ be a finite group, let $k$ be the number of divisors of $|\mathbf{Z}(G)|$, and consider $m_G$ as a map from $\mathcal{S}(G)$ to $\mathbb{Z}^+$.  Then $|Im(m_G)| \geq k$.
\end{proposition}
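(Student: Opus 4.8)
The plan is to produce explicitly $k$ subgroups of $G$ on which $m_G$ takes distinct values, drawing all of them from inside the center. The key observation is that the center trivializes the centralizer computation: if $H \leq \mathbf{Z}(G)$, then every element of $G$ commutes with every element of $H$, so $\mathbf{C}_G(H) = G$ and hence
\[
m_G(H) = |H|\cdot|\mathbf{C}_G(H)| = |H|\cdot|G|.
\]
Thus the value of $m_G$ on a subgroup of the center is determined entirely by its order, and two such subgroups of different orders necessarily receive different measures (since $|G|$ is a fixed nonzero factor).

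Next I would invoke the standard fact that a finite abelian group possesses a subgroup of every order dividing its order (the converse of Lagrange's theorem holds for abelian, indeed nilpotent, groups; it follows directly from the structure theorem for finite abelian groups). Since $\mathbf{Z}(G)$ is a finite abelian group, for each of the $k$ divisors $d$ of $|\mathbf{Z}(G)|$ there is a subgroup $H_d \leq \mathbf{Z}(G)$ with $|H_d| = d$. By the computation above, $m_G(H_d) = d\cdot|G|$.

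To finish, I would observe that the map $d \mapsto d\cdot|G|$ is injective on the set of divisors of $|\mathbf{Z}(G)|$, so the values $m_G(H_d)$ are pairwise distinct as $d$ ranges over these $k$ divisors. Hence $m_G$ attains at least $k$ distinct values, giving $|Im(m_G)| \geq k$. I do not expect any genuine obstacle here: the only nontrivial ingredient is the existence of subgroups of all divisor-orders inside the abelian group $\mathbf{Z}(G)$, and everything else is the immediate centralizer identity $\mathbf{C}_G(H) = G$ for central $H$. One minor point worth stating carefully is that the subgroups $H_d$ are genuinely distinct elements of $\mathcal{S}(G)$ (they have distinct orders), so they are indeed $k$ separate points of the domain, though for the conclusion about $|Im(m_G)|$ it in fact suffices that their images are distinct.
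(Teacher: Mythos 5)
Your proposal is correct and follows essentially the same route as the paper's proof: take a subgroup $H_d \leq \mathbf{Z}(G)$ of each order $d$ dividing $|\mathbf{Z}(G)|$ (using the converse of Lagrange for abelian groups), note that $m_G(H_d) = d\cdot|G|$ since $\mathbf{C}_G(H_d) = G$, and conclude these $k$ values are distinct. The paper's argument is just a terser version of exactly this.
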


\begin{proof}
Since $\mathbf{Z}(G)$ is abelian, $\mathbf{Z}(G)$ possesses a subgroup $H$ of order $n$ for each $n$ dividing $|\mathbf{Z}(G)|$, and $m_G(H) = n \cdot |G|$.  Thus $\{ n\cdot|G| \,\, : \,\, n \,\, \mid \,\, |\mathbf{Z}(G)| \} \subseteq Im(m_G)$, and the result follows.
\end{proof}

The class of groups $G$ in Theorem \ref{prev: increasing} achieve this lower bound on the size of the image of $m_G$.

\begin{corollary}
    Let $G$ be a finite group, and let $k$ be the number of divisors of $|\mathbf{Z}(G)|$.  If $m_G$ is increasing on $\mathcal{S}(G)$, then $|Im(m_G)| = k$.
\end{corollary}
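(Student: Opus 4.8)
The plan is to combine the lower bound already established in the immediately preceding proposition with a matching upper bound, so that the two squeeze $|Im(m_G)|$ to exactly $k$. The preceding proposition gives $\{\, n\cdot|G| : n \mid |\mathbf{Z}(G)| \,\} \subseteq Im(m_G)$, hence $|Im(m_G)| \geq k$. All the work, then, is in proving the reverse containment $Im(m_G) \subseteq \{\, n\cdot|G| : n \mid |\mathbf{Z}(G)| \,\}$, which will force $|Im(m_G)| \leq k$.

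To obtain the reverse containment, I would invoke Theorem \ref{prev: increasing}. Since $m_G$ is assumed increasing on $\mathcal{S}(G)$, condition $(1)$ of that theorem holds, so condition $(2)$ holds as well: $m_G(H) = m_G(H \cap \mathbf{Z}(G))$ for every subgroup $H \leq G$. Writing $Z = \mathbf{Z}(G)$, this shows that every value of $m_G$ is already attained on a subgroup of $Z$, namely $m_G(H) = m_G(H \cap Z)$ with $H \cap Z \leq Z$. Therefore $Im(m_G) \subseteq \{\, m_G(A) : A \leq Z \,\}$.

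Next I would evaluate $m_G$ on subgroups of the center. If $A \leq Z$, then $A$ is central, so $\mathbf{C}_G(A) = G$ and $m_G(A) = |A|\cdot|\mathbf{C}_G(A)| = |A|\cdot|G|$. Thus $m_G(A)$ depends only on the order $|A|$, and $|A|$ is a divisor of $|Z|$. Consequently $\{\, m_G(A) : A \leq Z \,\} \subseteq \{\, n\cdot|G| : n \mid |Z| \,\}$, and combining with the previous paragraph yields $Im(m_G) \subseteq \{\, n\cdot|G| : n \mid |Z| \,\}$. Since distinct divisors $n$ of $|Z|$ give distinct products $n\cdot|G|$, this last set has exactly $k$ elements, so $|Im(m_G)| \leq k$.

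Together with the lower bound $|Im(m_G)| \geq k$ from the preceding proposition, this gives $|Im(m_G)| = k$, completing the proof. I do not anticipate a genuine obstacle here: the only point requiring care is recognizing that the containment proved in the preceding proposition is in fact an equality once $m_G$ is increasing, and this is precisely what condition $(2)$ of Theorem \ref{prev: increasing} supplies.
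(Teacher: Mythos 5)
Your proposal is correct and follows essentially the same route as the paper: both invoke $(1)\rightarrow(2)$ of Theorem \ref{prev: increasing} to reduce every value of $m_G$ to a value on a subgroup of the center, where $m_G(A)=|A|\cdot|G|$, and combine this with the containment from the preceding proposition. The paper merely compresses the upper-bound argument into ``it follows that,'' whereas you spell it out explicitly.
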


\begin{proof}
    If $m_G$ is increasing on $\mathcal{S}(G)$, then $m_G(H) = m_G(H\cap \mathbf{Z}(G))$ for all subgroups $H$ of $G$ ($(1) \rightarrow (2)$ in Theorem \ref{prev: increasing}), and it follows that $\{ n\cdot|G| \,\, : \,\, n \,\, \mid \,\, |\mathbf{Z}(G)| \} = Im(m_G)$.
\end{proof}

An open question is ``What can be said about groups $G$ with $|Im(m_G)| = k$, where $k$ be the number of divisors of $|\mathbf{Z}(G)|$?''.  More questions along these lines can be found in \cite{Tar_20}.

A classification of groups $G$ having $\mathcal{CD}(G) = [G/\mathbf{Z}(G)]$ appears in \cite{Tar_18}, and is based on the classification of groups $G$ having $\mathcal{C}(G) = [G/\mathbf{Z}(G)]$ that appears in \cite{Sch94}.  We collect two results of Cheng, both in \cite{Cheng_82}, that provide the classification for the $p$-group case.  Corollary \ref{cor: nilp_classif} in the next section yields a classification of the general case that is equivalent to the one in \cite{Tar_18}.

\begin{proposition} \label{prop: Cheng}
    Let $G$ be a finite $p$-group.  Then $\mathcal{C}(G) = [G/\mathbf{Z}(G)]$ if and only if $G' = \langle a \rangle$ is cyclic and $[a,G] \leq \langle a^4 \rangle$.  If $p>2$, then, clearly $[a,G] \leq \langle a^4 \rangle$, and so in that case $\mathcal{C}(G) = [G/\mathbf{Z}(G)]$ if and only if $G'$ is cyclic.
\end{proposition}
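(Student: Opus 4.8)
The plan is to recast the hypothesis $\mathcal{C}(G)=[G/\mathbf{Z}(G)]$ as a statement about an anti-automorphism of a lattice. Write $Z=\mathbf{Z}(G)$. Every centralizer contains $Z$, so $\mathcal{C}(G)\subseteq[G/Z]$ automatically, and by Proposition \ref{prop: basic_cent}(1) the map $C\mapsto\mathbf{C}_G(C)$ is an order-reversing involution on $\mathcal{C}(G)$. Consequently $\mathcal{C}(G)=[G/Z]$ holds exactly when $H\mapsto\mathbf{C}_G(H)$ restricts to an order-reversing bijection of the interval $[G/Z]$; via the correspondence theorem $[G/Z]\cong\mathcal{S}(G/Z)$, so a first necessary consequence is that $\mathcal{S}(G/Z)$ is self-dual, with the duality realized by centralization. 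I would mine this involution by following atoms and coatoms: the atoms of $[G/Z]$ are the subgroups $\langle x\rangle Z$ with $xZ$ of order $p$, and an anti-automorphism must send each to a coatom, forcing $\mathbf{C}_G(x)$ to be maximal in $G$ for every non-central $x$ with $x^p\in Z$. This kind of rigidity is what ultimately pins down the commutator structure.

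For the forward direction the substantive task is to prove that $G'$ is cyclic and then that $G$ acts on $G'$ in a restricted way. The action enters through the conjugation homomorphism $G\to\operatorname{Aut}(G')$: if $G'=\langle a\rangle$, then $[a,g]=a^{-1}a^{g}$ records the action, so $[a,G]\le\langle a^4\rangle$ is precisely the statement that every conjugate of $a$ is $a^{k}$ with $k\equiv 1\pmod 4$. For odd $p$ this is automatic, and indeed I would dispatch the case $p>2$ cleanly first: since $\gcd(4,|a|)=1$ we have $\langle a^4\rangle=\langle a\rangle=G'$, while $[a,G]\le G'$ always (as $a,a^{g}\in G'$ give $[a,g]\in G'$), so the extra condition is vacuous and the claim reduces to ``$\mathcal{C}(G)=[G/Z]\iff G'$ cyclic.'' The delicate part is $p=2$, where $\operatorname{Aut}(C_{2^{n}})\cong C_2\times C_{2^{n-2}}$ is non-cyclic and admits the inversion $a\mapsto a^{-1}$; that action would place $a^{2}\in[a,G]$, violating $[a,G]\le\langle a^4\rangle$ once $|a|\ge 4$, and excluding such actions is exactly the content of the $\langle a^4\rangle$ condition.

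For the converse I would assume $G'=\langle a\rangle$ is cyclic with $[a,G]\le\langle a^4\rangle$ and verify directly that every $H\ge Z$ is bicentrally closed, i.e.\ $\mathbf{C}_G(\mathbf{C}_G(H))=H$; since $\mathbf{C}_G(\mathbf{C}_G(H))\ge H$ always, only the reverse inclusion needs proof. Here I would exploit that a $p$-group with cyclic commutator subgroup whose conjugation action is so tightly constrained behaves almost like a class-two group, reducing the verification to two-generator sections and explicit commutator computations inside $\langle a\rangle$. The main obstacle throughout is the $p=2$ analysis: controlling the interplay between the non-cyclic automorphism group of $\langle a\rangle$, the position of $Z$, and the fourth-power condition is genuinely delicate, and it is precisely this computation that Cheng carries out in \cite{Cheng_82}. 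The cleanest route for the present paper is therefore to assemble the two relevant theorems of \cite{Cheng_82} (compatibly with the classification of $\mathcal{C}(G)=[G/Z]$ in \cite{Sch94} and \cite{Tar_18}) rather than to reprove them from scratch.
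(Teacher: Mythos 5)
Your proposal ends exactly where the paper does: the paper offers no proof of this proposition at all, presenting it as two results ``collected'' from Cheng \cite{Cheng_82}, and your closing recommendation to assemble Cheng's theorems rather than reprove them is precisely the paper's approach. The reductions you verify along the way --- that $\mathcal{C}(G) = [G/\mathbf{Z}(G)]$ is equivalent to centralization being an order-reversing involution of the interval $[G/\mathbf{Z}(G)]$, and that for $p>2$ the condition $[a,G] \leq \langle a^4 \rangle$ is vacuous since $\gcd(4,|a|)=1$ gives $\langle a^4\rangle = \langle a\rangle = G'$ --- are correct, though the paper includes none of this.
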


Note that the case $G$ abelian is included in Proposition \ref{prop: Cheng}.

\begin{proposition}
    If $G$ is as in Proposition \ref{prop: Cheng}, $Z = \mathbf{Z}(G)$, and $Z \leq H \leq G$, then $$|G/Z| = |H/Z| \cdot |\mathbf{C}_G(H)/Z|.$$
\end{proposition}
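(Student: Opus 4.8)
The plan is to recast the asserted identity as a statement about ranks in the interval $[G/Z]$, where $Z = \mathbf{Z}(G)$. Writing $p^{n} = [G:Z]$, the equality $|G/Z| = |H/Z|\cdot|\mathbf{C}_G(H)/Z|$ is equivalent to $\log_p[\mathbf{C}_G(H):Z] = n - \log_p[H:Z]$; that is, passing from $H$ to its centralizer should send a subgroup sitting $k$ steps above $Z$ to one sitting $n-k$ steps above $Z$. So the whole proof reduces to recognizing the centralizer map as a rank-complementing operation on $[G/Z]$.

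First I would record that $\sigma\colon H \mapsto \mathbf{C}_G(H)$ is an order-reversing involution of the interval $[G/Z]$. By hypothesis (Proposition \ref{prop: Cheng}) we have $\mathcal{C}(G) = [G/\mathbf{Z}(G)]$, so every subgroup $H$ with $Z \le H \le G$ is a centralizer; hence $\mathbf{C}_G(\mathbf{C}_G(H)) = H$ by Proposition \ref{prop: basic_cent}(1). Since also $Z \le \mathbf{C}_G(H) \le G$, the map $\sigma$ carries $[G/Z]$ into itself and satisfies $\sigma^2 = \mathrm{id}$, so it is a bijection; it reverses inclusions because $H_1 \le H_2$ forces $\mathbf{C}_G(H_2) \le \mathbf{C}_G(H_1)$. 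This is the one and only place the hypothesis is used, and notably only the equality $\mathcal{C}(G) = [G/Z]$ is needed, not the finer structural description in Proposition \ref{prop: Cheng}.

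Next I would exploit that $G$ is a $p$-group to make $[G/Z]$ a ranked poset. Every covering relation $K \lessdot L$ in the subgroup lattice of a $p$-group has $[L:K] = p$, and since every subgroup lying between two members of $[G/Z]$ automatically contains $Z$, these are exactly the covers of $[G/Z]$. Consequently every maximal chain from $Z$ to $G$ has length $n = \log_p[G:Z]$, and $\ell(H) := \log_p[H:Z]$ is a genuine rank function. An order-reversing bijection sends covers to covers, so $\sigma$ carries a maximal chain $Z = H_0 \lessdot \cdots \lessdot H_n = G$ to the reversed maximal chain $G = \sigma(H_0) \gtrdot \cdots \gtrdot \sigma(H_n) = Z$. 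Placing a given $H$ with $\ell(H) = k$ inside a maximal chain then forces $\ell(\sigma(H)) = n - k$, i.e. $[\mathbf{C}_G(H):Z] = p^{\,n-k} = [G:Z]/[H:Z]$, which is exactly the desired identity.

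The main point to verify carefully is that $\sigma$ really preserves covers: if $K \lessdot L$ and $\sigma(L) \le W \le \sigma(K)$, then applying the order-reversing map $\sigma^{-1} = \sigma$ gives $K \le \sigma(W) \le L$, whence $\sigma(W) \in \{K,L\}$ and so $W \in \{\sigma(K),\sigma(L)\}$; thus $\sigma(L) \lessdot \sigma(K)$. Granting this, the rank-complementation is immediate, and no commutator computation or case analysis on the prime $p$ is required. I expect this cover-preservation step, together with confirming that $[G/Z]$ is genuinely ranked by index-$p$ covers, to be the only (and routine) obstacle.
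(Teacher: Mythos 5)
Your argument is correct: the hypothesis of Proposition \ref{prop: Cheng} gives $\mathcal{C}(G) = [G/\mathbf{Z}(G)]$, so by Proposition \ref{prop: basic_cent}(1) the map $\sigma \colon H \mapsto \mathbf{C}_G(H)$ is an order-reversing involution of the interval $[G/\mathbf{Z}(G)]$ (note $\sigma(Z)=G$ and $\sigma(G)=Z$); your involution trick does show $\sigma$ sends covers to covers; and since every cover in the subgroup lattice of a finite $p$-group has index $p$, the interval is graded by $\ell(H)=\log_p[H:Z]$, so applying $\sigma$ to a maximal chain through $H$ forces $[\mathbf{C}_G(H):Z]=[G:Z]/[H:Z]$, which is the claimed identity. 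However, there is no proof in the paper to compare against: the proposition is stated as one of Cheng's results and attributed to \cite{Cheng_82}, with the classification-type hypothesis ($G'=\langle a\rangle$ cyclic, $[a,G]\leq\langle a^4\rangle$) carried along from Proposition \ref{prop: Cheng}. What your proof adds is a self-contained, purely lattice-theoretic derivation showing that the counting identity uses none of that structural information --- only the equality $\mathcal{C}(G)=[G/\mathbf{Z}(G)]$ together with the fact that subgroup lattices of $p$-groups are graded by index-$p$ covers. (This is not a strict generalization, since Proposition \ref{prop: Cheng} says those $p$-groups are exactly the ones with $\mathcal{C}(G)=[G/\mathbf{Z}(G)]$, but it does make the paper's subsequent remark --- that the two propositions together yield $\CD(G)=\mathcal{C}(G)=[G/\mathbf{Z}(G)]$ --- provable without consulting \cite{Cheng_82} for the second proposition.)
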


It is clear that the above two propositions together imply that a $p$-group $G$ has $\CD(G) = \mathcal{C}(G) = [G/\mathbf{Z}(G)]$ if and only if $G$ is as in Proposition \ref{prop: Cheng}.  This was also noted in \cite{Tar_18}.

\section{Monotone Chermak--Delgado Measure on Centralizers.}\label{sec: centralizers}

In this section, we prove Theorem \ref{prev: centralizer_increasing} which classifies when the Chermak--Delgado measure is increasing on centralizer subgroups of $G$.  

\begin{proposition}\label{prop: sub-nilp}
 Let $G$ be a finite group.  Then the following are equivalent.  
 \begin{enumerate}
 \item $G$ is nilpotent. 
 \item $\mathcal{S}(G) = \mathcal{S}\textit{n}(G)$.
 \item $\mathcal{C}(G) \subseteq \mathcal{S}\textit{n}(G)$.
 \end{enumerate}
\end{proposition}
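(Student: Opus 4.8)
The plan is to prove the cycle $(1)\Rightarrow(2)\Rightarrow(3)\Rightarrow(1)$. The implication $(2)\Rightarrow(3)$ is immediate, since centralizers are subgroups and so $\mathcal{C}(G)\subseteq\mathcal{S}(G)=\mathcal{S}\textit{n}(G)$. For $(1)\Rightarrow(2)$ I would invoke the normalizer condition for nilpotent groups: if $G$ is nilpotent then $H<\mathbf{N}_G(H)$ for every proper subgroup $H$, so iterating normalizers yields a chain $H\triangleleft\mathbf{N}_G(H)\triangleleft\mathbf{N}_G(\mathbf{N}_G(H))\triangleleft\cdots\triangleleft G$ in which each term is normal in the next, witnessing that $H$ is subnormal. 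Hence every subgroup is subnormal and $\mathcal{S}(G)=\mathcal{S}\textit{n}(G)$.

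The real content is $(3)\Rightarrow(1)$, and the key observation is that every cyclic subgroup lies centrally inside its own centralizer. Indeed, for $x\in G$ every element of $\mathbf{C}_G(x)$ commutes with $x$, so $\langle x\rangle\le\mathbf{Z}(\mathbf{C}_G(x))$ and in particular $\langle x\rangle\trianglelefteq\mathbf{C}_G(x)$. Since $\mathbf{C}_G(x)\in\mathcal{C}(G)$, hypothesis $(3)$ guarantees that $\mathbf{C}_G(x)$ is subnormal in $G$, and prepending $\langle x\rangle\triangleleft\mathbf{C}_G(x)$ to a subnormal series for $\mathbf{C}_G(x)$ shows $\langle x\rangle$ is subnormal in $G$. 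Thus every cyclic subgroup of $G$ is subnormal. Because each $H\le G$ is the join of the finitely many cyclic subgroups it contains, Wielandt's lemma (Proposition~\ref{prop: sub-join}), applied inductively, shows $H$ is subnormal; hence $\mathcal{S}(G)=\mathcal{S}\textit{n}(G)$. I expect this reduction to cyclic subgroups to be the crux of the argument. A naive induction on $|G|$ through the quotient $G/\mathbf{Z}(G)$ stalls, because the property ``every centralizer is subnormal'' need not be inherited by quotients: a centralizer in $G/\mathbf{Z}(G)$ need not be the image of a centralizer in $G$. The observation above sidesteps quotients entirely.

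To finish $(3)\Rightarrow(1)$ I still need that ``every subgroup subnormal'' forces nilpotency, for which it suffices to show that each Sylow subgroup is normal. Here I would prove the standard lemma that a subnormal Sylow subgroup is normal: given a subnormal series $P=H_0\triangleleft H_1\triangleleft\cdots\triangleleft H_n=G$ with $P$ a Sylow $p$-subgroup, $P$ is Sylow and normal in $H_1$, hence the unique Sylow $p$-subgroup of $H_1$ and therefore characteristic in $H_1$; as a characteristic subgroup of the normal subgroup $H_1\triangleleft H_2$ it is normal in $H_2$, and induction along the series gives $P\triangleleft G$. Applying this to every Sylow subgroup of $G$ shows they are all normal, so $G$ is their direct product and therefore nilpotent, closing the cycle.
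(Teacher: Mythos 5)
Your proposal is correct and its crux is exactly the paper's: the observation that $\langle x \rangle \trianglelefteq \mathbf{C}_G(x)$ (indeed $\langle x\rangle \le \mathbf{Z}(\mathbf{C}_G(x))$), so that hypothesis $(3)$ makes every cyclic subgroup subnormal, and then Wielandt's join lemma (Proposition~\ref{prop: sub-join}) promotes this to all subgroups. The only difference is one of packaging: where the paper cites Isaacs for the classical equivalence of $(1)$ and $(2)$, you prove both directions from scratch (the normalizer condition for $(1)\Rightarrow(2)$, and the subnormal-Sylow-is-normal argument for $(2)\Rightarrow(1)$), which makes your write-up self-contained but not different in substance.
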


\begin{proof}
$(1) \leftrightarrow (2)$ is Lemma 2.1 in \cite{FGT}.

$(2) \rightarrow (3)$ is a tautological weakening of hypotheses. 

To see that $(3) \rightarrow (2)$, let $H$ be an arbitrary subgroup of $G$.  Now for any $x \in G$ we have $\langle x \rangle \unlhd \mathbf{C}_G(x)$ which is subnormal in $G$.  Hence $H = \langle \langle x \rangle \,\, | \,\, x \in H \rangle$ is subnormal by Proposition \ref{prop: sub-join}.
\end{proof}

We now prove Theorem \ref{prev: centralizer_increasing} stating that the Chermak--Delgado measure is increasing on centralizers if and only if the Chermak--Delgado lattice is exactly the centralizer lattice. Moreover if the Chermak--Delgado measure is increasing on centralizers, then the group is nilpotent. 

\begin{proof}[Proof of Theorem \ref{prev: centralizer_increasing}] 
We can apply Corollary \ref{cor: gen_inc} taking $\mathcal{P}$ to be $\mathcal{C}(G)$. This gives us that $\mathcal{C}(G) \subseteq \CD(G)$. Since $\CD(G)$ consists of centralizers we conclude that $\CD(G) = \mathcal{C}(G)$. 

By Proposition \ref{prop: CD_Sn}, $\CD(G) \subseteq \mathcal{S}\textit{n}(G)$, and so the nilpotency of $G$ follows from Proposition \ref{prop: sub-nilp}.
\end{proof}

Recall that finite nilpotent groups are direct products of their Sylow subgroups. Moreover, both the Chermak--Delgado lattice and the centralizer lattice respect direct products.  For the Chermak--Delgado lattice, the result appears in \cite{Bre_12}, and for the centralizer lattice it is an exercise in \cite{Sch94}. We collect this together in the proposition below. 

\begin{proposition}
    Let $G_1,\dots,G_n$ be finite groups.  Then 
\begin{itemize}
    \item $\mathcal{C}(G_1 \times \cdots \times G_n) = \mathcal{C}(G_1) \times \cdots \times \mathcal{C}(G_n)$.
    \item $\CD(G_1 \times \cdots \times G_n) = \CD(G_1) \times \cdots \times \CD(G_n)$.
\end{itemize}
\end{proposition}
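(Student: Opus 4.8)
The plan is to prove each of the two claimed identities separately, and it suffices by induction to handle the case $n=2$, writing $G = G_1 \times G_2$. The centralizer statement rests on the elementary fact that centralizers in a direct product factor coordinatewise: for a subgroup $H \leq G_1 \times G_2$, one does \emph{not} in general have $H = H_1 \times H_2$, so some care is needed. The cleaner route is to show both inclusions between the lattices $\mathcal{C}(G_1 \times G_2)$ and $\mathcal{C}(G_1) \times \mathcal{C}(G_2)$ directly. For the inclusion $\mathcal{C}(G_1) \times \mathcal{C}(G_2) \subseteq \mathcal{C}(G_1 \times G_2)$, I would take centralizers $C_i = \mathbf{C}_{G_i}(A_i)$ and observe that $C_1 \times C_2 = \mathbf{C}_{G_1 \times G_2}(A_1 \times A_2)$, which is immediate from the coordinatewise description of commuting in a direct product. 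For the reverse inclusion, I would take an arbitrary $C = \mathbf{C}_{G_1 \times G_2}(S)$ for a subset $S$ and show it decomposes as a product of centralizers in each factor; the key computation is that an element $(x_1,x_2)$ commutes with $(s_1,s_2)$ if and only if $x_i$ commutes with $s_i$ for each $i$, so $\mathbf{C}_{G_1\times G_2}(S) = \mathbf{C}_{G_1}(\pi_1(S)) \times \mathbf{C}_{G_2}(\pi_2(S))$ where $\pi_i$ are the coordinate projections.

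For the Chermak--Delgado statement, the plan is to combine the multiplicativity of the measure under direct products with the lattice structure. The crucial observation is that the Chermak--Delgado measure is multiplicative: $m_{G_1 \times G_2}(H_1 \times H_2) = m_{G_1}(H_1) \cdot m_{G_2}(H_2)$, which follows from $|H_1 \times H_2| = |H_1|\,|H_2|$ together with the centralizer decomposition established above, giving $|\mathbf{C}_{G_1 \times G_2}(H_1 \times H_2)| = |\mathbf{C}_{G_1}(H_1)|\,|\mathbf{C}_{G_2}(H_2)|$. Since the maximum of a product of two independent quantities is the product of their maxima, the maximal value of $m_{G_1 \times G_2}$ on product subgroups equals $M_1 \cdot M_2$, where $M_i$ is the maximal value of $m_{G_i}$. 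One then needs to check that every subgroup of $G_1 \times G_2$ achieving the global maximum is in fact a product subgroup lying in $\CD(G_1) \times \CD(G_2)$, and conversely that every such product achieves the maximum.

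The main obstacle is precisely the issue that a general subgroup $H \leq G_1 \times G_2$ need not be a direct product of subgroups of $G_1$ and $G_2$ (it may be a ``diagonal'' subgroup, via Goursat's lemma). So the heart of the argument is to rule out such subgroups from $\CD(G_1 \times G_2)$. I would handle this by comparing $m_{G_1 \times G_2}(H)$ with $m_{G_1 \times G_2}(\pi_1(H) \times \pi_2(H))$, where $\pi_i(H)$ is the image of $H$ under the projection to $G_i$. Since $H \leq \pi_1(H) \times \pi_2(H)$ and since the centralizer of $H$ equals the centralizer of $\pi_1(H) \times \pi_2(H)$ (the centralizer depends only on the projections, by the coordinatewise commuting criterion), one gets $m_{G_1 \times G_2}(H) \leq m_{G_1 \times G_2}(\pi_1(H) \times \pi_2(H))$ with equality if and only if $H = \pi_1(H) \times \pi_2(H)$. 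This forces every element of the Chermak--Delgado lattice to be a genuine product subgroup, after which the multiplicativity computation closes the argument.

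Rather than reprove these facts in full, since both identities are explicitly cited to the literature (the Chermak--Delgado case to Brewster and Wilcox \cite{Bre_12}, and the centralizer case to an exercise in \cite{Sch94}), I would present the above as the conceptual outline and defer the routine verifications to those references.
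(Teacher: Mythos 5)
Your proposal is correct, but it is worth noting that the paper itself does not prove this proposition at all: the authors explicitly defer it to the literature, citing Brewster and Wilcox for the Chermak--Delgado statement and an exercise in Schmidt's book for the centralizer statement. So your outline actually supplies more than the paper does, and the mathematics in it is sound. The two key observations are exactly the right ones: first, the identity $\mathbf{C}_{G_1 \times G_2}(S) = \mathbf{C}_{G_1}(\pi_1(S)) \times \mathbf{C}_{G_2}(\pi_2(S))$, which immediately gives both inclusions for the centralizer lattice; and second, the comparison $m_{G_1 \times G_2}(H) \leq m_{G_1 \times G_2}(\pi_1(H) \times \pi_2(H))$, which holds because the two subgroups have the same centralizer while $|H| \leq |\pi_1(H)|\,|\pi_2(H)|$, with equality precisely when $H$ is itself a product. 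This second point is the essential content, since it disposes of the Goursat-type diagonal subgroups: any $H$ of maximal measure is forced to equal $\pi_1(H) \times \pi_2(H)$, after which multiplicativity of the measure on product subgroups yields $\mathcal{CD}(G_1 \times G_2) = \mathcal{CD}(G_1) \times \mathcal{CD}(G_2)$. This is essentially the standard argument (and close to what appears in the cited Brewster--Wilcox paper). The only cosmetic quibble is that your closing paragraph defers ``routine verifications'' to the references when in fact your outline already contains every needed step; written out, it would be a complete and self-contained proof, which would be a reasonable thing to include given that the paper leaves it as a citation.
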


We immediately obtain the following. 

\begin{corollary}\label{cor: inc_decom}
  Suppose $G$ is a finite group.  Then $m_G$ is increasing on $\mathcal{C}(G)$ if and only if $G = P_1 \times \cdots \times P_n$, with $p_1, \dots, p_n$ distinct primes and each $P_i$ a $p_i$-group with $\mathcal{C}(P_i) = \CD(P_i)$.
\end{corollary}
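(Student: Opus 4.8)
The plan is to prove Corollary \ref{cor: inc_decom} by combining Theorem \ref{prev: centralizer_increasing} with the direct-product decomposition of the two lattices. The statement asserts an equivalence, so I would prove each direction separately, though both rest on the same structural facts established just above.

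For the forward direction, suppose $m_G$ is increasing on $\mathcal{C}(G)$. By Theorem \ref{prev: centralizer_increasing}, this immediately gives that $\mathcal{C}(G) = \CD(G)$ and, crucially, that $G$ is nilpotent. Since a finite nilpotent group is the internal direct product of its Sylow subgroups, I can write $G = P_1 \times \cdots \times P_n$ where the $p_i$ are the distinct primes dividing $|G|$. It remains to check that each factor satisfies $\mathcal{C}(P_i) = \CD(P_i)$. Here I would invoke the preceding proposition on how the two lattices respect direct products: from $\mathcal{C}(G) = \CD(G)$ we have $\mathcal{C}(P_1) \times \cdots \times \mathcal{C}(P_n) = \CD(P_1) \times \cdots \times \CD(P_n)$, and since we always have $\CD(P_i) \subseteq \mathcal{C}(P_i)$, the equality of products forces equality in each coordinate.

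For the reverse direction, suppose $G = P_1 \times \cdots \times P_n$ with the $p_i$ distinct primes and $\mathcal{C}(P_i) = \CD(P_i)$ for each $i$. Applying the direct-product proposition to both lattices gives $\mathcal{C}(G) = \prod_i \mathcal{C}(P_i) = \prod_i \CD(P_i) = \CD(G)$. Thus $\mathcal{C}(G) = \CD(G)$, and by the ``if'' direction of Theorem \ref{prev: centralizer_increasing} we conclude that $m_G$ is increasing on $\mathcal{C}(G)$.

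The one point requiring care, rather than genuine difficulty, is the coordinatewise extraction of $\mathcal{C}(P_i) = \CD(P_i)$ from the equality of product lattices in the forward direction. This needs the observation that the containment $\CD(P_i) \subseteq \mathcal{C}(P_i)$ holds for each factor (a special case of the earlier $\CD(G) \subseteq \mathcal{C}(G)$), so that an element of $\mathcal{C}(P_i)$, when padded with the full lattices in the other coordinates to form an element of $\mathcal{C}(G) = \CD(G) = \prod_j \CD(P_j)$, must already lie in $\CD(P_i)$. Everything else is a direct substitution into the two results stated immediately before the corollary, so I expect no serious obstacle; the corollary is essentially a bookkeeping consequence of Theorem \ref{prev: centralizer_increasing} together with the multiplicativity of $\mathcal{C}$ and $\CD$ over direct products.
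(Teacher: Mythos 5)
Your proof is correct and takes essentially the same route as the paper, which presents this corollary as an immediate consequence of Theorem \ref{prev: centralizer_increasing} (whose nilpotency conclusion yields the Sylow decomposition $G = P_1 \times \cdots \times P_n$) combined with the preceding proposition that $\mathcal{C}$ and $\CD{}$ respect direct products. Your coordinatewise extraction of $\mathcal{C}(P_i) = \CD(P_i)$ from the equality of product lattices simply spells out the step the paper leaves implicit, and it is sound.
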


Restricting to the case of Theorem \ref{prev: increasing} we obtain a classification.

\begin{corollary}\label{cor: nilp_classif}
  Suppose $G$ is a finite group.  Then $m_G$ is increasing on $\mathcal{S}(G)$ if and only if $G = P_1 \times \cdots \times P_n$, with $p_1, \dots, p_n$ distinct primes and each $P_i$ a $p_i$-group as in Proposition \ref{prop: Cheng}.
\end{corollary}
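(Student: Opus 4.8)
The plan is to assemble Corollary \ref{cor: nilp_classif} from two already-established results: Theorem \ref{prev: increasing}, which identifies the condition ``$m_G$ increasing on $\mathcal{S}(G)$'' with the lattice-theoretic condition $\CD(G) = [G/\mathbf{Z}(G)]$, and Corollary \ref{cor: inc_decom}, which handles the decomposition into $p$-group factors for the centralizer lattice. The key observation tying these together is the remark, stated immediately after the proof of Theorem \ref{prev: increasing} in the $p$-group discussion, that a $p$-group $G$ satisfies $\CD(G) = \mathcal{C}(G) = [G/\mathbf{Z}(G)]$ precisely when $G$ is as in Proposition \ref{prop: Cheng}. So the whole corollary should follow by threading the equivalences rather than by any fresh computation.

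Concretely, first I would invoke Theorem \ref{prev: increasing} to rewrite ``$m_G$ increasing on $\mathcal{S}(G)$'' as $\CD(G) = [G/\mathbf{Z}(G)]$. Since every centralizer contains $\mathbf{Z}(G)$ and $\CD(G) \subseteq \mathcal{C}(G) \subseteq [G/\mathbf{Z}(G)]$, the equality $\CD(G) = [G/\mathbf{Z}(G)]$ forces $\mathcal{C}(G) = \CD(G)$, so that $m_G$ is in particular increasing on $\mathcal{C}(G)$ as well. This lets me bring Corollary \ref{cor: inc_decom} to bear, obtaining a decomposition $G = P_1 \times \cdots \times P_n$ into Sylow factors with $\mathcal{C}(P_i) = \CD(P_i)$ for each $i$. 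For the converse direction I would start from such a decomposition, use that both $\CD$ and $\mathcal{C}$ respect direct products (the preceding Proposition), and note that $[G/\mathbf{Z}(G)]$ also decomposes as a product of the corresponding intervals in each factor, so the factorwise identity $\CD(P_i) = [P_i/\mathbf{Z}(P_i)]$ assembles into $\CD(G) = [G/\mathbf{Z}(G)]$.

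The remaining step is to pin down exactly when a single $p$-group factor $P_i$ satisfies the condition needed. Here I would argue that for a $p$-group, $\mathcal{C}(P_i) = \CD(P_i)$ combined with $\CD(P_i) = [P_i/\mathbf{Z}(P_i)]$ is equivalent to $\mathcal{C}(P_i) = [P_i/\mathbf{Z}(P_i)]$, which by Proposition \ref{prop: Cheng} is the condition that $P_i' = \langle a \rangle$ is cyclic with $[a, P_i] \leq \langle a^4 \rangle$. The subtlety is making sure the centralizer condition and the Chermak--Delgado condition coincide at the level of each $p$-group factor; this is precisely the content of the observation already recorded after Proposition \ref{prop: Cheng}, namely that a $p$-group has $\CD(P_i) = \mathcal{C}(P_i) = [P_i/\mathbf{Z}(P_i)]$ if and only if it is as in Proposition \ref{prop: Cheng}. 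Thus the classification reduces to applying that observation factorwise.

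The main obstacle, if any, is bookkeeping rather than mathematical depth: I must check that the interval $[G/\mathbf{Z}(G)]$ genuinely factors as $\prod_i [P_i/\mathbf{Z}(P_i)]$, using that $\mathbf{Z}(G) = \prod_i \mathbf{Z}(P_i)$ for a direct product of groups of coprime order, and that subgroups of $G$ containing $\mathbf{Z}(G)$ correspond to tuples of subgroups of the $P_i$ each containing $\mathbf{Z}(P_i)$. Because the factors have pairwise coprime orders, every subgroup of $G$ splits as a direct product of its projections, so this correspondence is automatic and no genuinely new argument is required. Given that and the cited results, the proof is essentially a two-line chain of equivalences, which is why the corollary is presented as following ``immediately.''
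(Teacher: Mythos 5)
Your proof is correct and takes essentially the same route as the paper's: both directions thread Theorem \ref{prev: increasing}, Corollary \ref{cor: inc_decom}, the direct-product propositions, and the observation following Proposition \ref{prop: Cheng} that a $p$-group has $\CD(P) = \mathcal{C}(P) = [P/\mathbf{Z}(P)]$ exactly when it is as in that proposition. The only cosmetic difference is that you obtain monotonicity of $m_G$ on $\mathcal{C}(G)$ via the sandwich $\CD(G) \subseteq \mathcal{C}(G) \subseteq [G/\mathbf{Z}(G)]$, whereas the paper simply restricts the increasing map from $\mathcal{S}(G)$ to the subset $\mathcal{C}(G)$; both are valid.
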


\begin{proof}
    First suppose that $G = P_1 \times \cdots \times P_n$, with $p_1, \dots, p_n$ distinct primes and each $P_i$ a $p_i$-group as in Proposition \ref{prop: Cheng}.  Since each $P_i$ has that $\CD(P_i) = \mathcal{C}(P_i) = [P_i/\mathbf{Z}(P_i)]$, and since $(|P_i|,|P_j|)=1$ for $i \neq j$, we have that $\CD(G) = \mathcal{C}(G) = [G/\mathbf{Z}(G)]$, and so $m_G$ is increasing on $\mathcal{S}(G)$ by Theorem \ref{prev: increasing}.

    Conversely, suppose that $m_G$ is increasing on $\mathcal{S}(G)$.  Then $m_G$ is increasing on $\mathcal{C}(G)$, and so by Corollary \ref{cor: inc_decom}, we have that $G = P_1 \times \cdots \times P_n$, with $p_1, \dots, p_n$ distinct primes and each $P_i$ a $p_i$-group with $\mathcal{C}(P_i) = \CD(P_i)$.  Since $m_G$ is increasing on $\mathcal{S}(G)$, by Theorem \ref{prev: increasing}, $\CD(G) = \mathcal{C}(G) = [G/\mathbf{Z}(G)]$, and so it follows that $\CD(P_i) = \mathcal{C}(P_i) = [P_i/\mathbf{Z}(P_i)]$ for each $i$, and so each $P_i$ is as in Proposition \ref{prop: Cheng}.
\end{proof}

It seems difficult to classify the $p$-groups $G$ with $\mathcal{C}(G) = \CD(G)$. However, we can still observe many interesting properties of these groups. 

We will say that an abelian subgroup $A$ of a finite group $G$ is self-centralizing, or maximal abelian, if $A = \mathbf{C}_G(A)$.  Note that self-centralizing abelian subgroups always exist, for if $A$ is an abelian subgroup and $x \in \mathbf{C}_G(A)$, then $\langle A, x \rangle$ is abelian.

\begin{proposition}\label{prop: quasi}
Suppose $G$ is a $p$-group with $\mathcal{C}(G) = \CD(G)$, and suppose $|G| = p^a$ and $|\mathbf{Z}(G)| = p^b$.  Then $a+b$ is even and every self-centralizing abelian subgroup $A$ has order $p^{\frac{a+b}{2}}$.
\end{proposition}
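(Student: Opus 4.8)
The plan is to pin down the maximal value $M$ of $m_G$ and then to evaluate $m_G$ on a self-centralizing abelian subgroup in two different ways, matching the two expressions.

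First I would identify $M$. The center $\mathbf{Z}(G) = \mathbf{C}_G(G)$ is a centralizer, so $\mathbf{Z}(G) \in \mathcal{C}(G) = \CD(G)$ by hypothesis. Since every member of $\CD(G)$ attains the maximal Chermak--Delgado measure $M$, this gives
\[
M = m_G(\mathbf{Z}(G)) = |\mathbf{Z}(G)| \cdot |\mathbf{C}_G(\mathbf{Z}(G))| = |\mathbf{Z}(G)| \cdot |G| = p^{b} \cdot p^{a} = p^{a+b}.
\]

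Next, let $A$ be any self-centralizing abelian subgroup, so that $A = \mathbf{C}_G(A)$. Then $A$ is visibly a centralizer, hence $A \in \mathcal{C}(G) = \CD(G)$, and therefore $m_G(A) = M = p^{a+b}$. On the other hand, directly from the definition and the relation $A = \mathbf{C}_G(A)$,
\[
m_G(A) = |A| \cdot |\mathbf{C}_G(A)| = |A|^2.
\]
Equating the two evaluations yields $|A|^2 = p^{a+b}$, which forces $a+b$ to be even and $|A| = p^{(a+b)/2}$. Because self-centralizing abelian subgroups always exist (as recalled just before the statement), the equation $|A|^2 = p^{a+b}$ is realized, so $a+b$ is even; and since the computation applies verbatim to an arbitrary such $A$, every self-centralizing abelian subgroup has the stated order.

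The argument is short, and I do not expect a genuine obstacle: the hypothesis $\mathcal{C}(G) = \CD(G)$ is exactly what collapses the two natural expressions for the maximal measure onto the same subgroup $A$. The only point requiring care is the justification that $M = p^{a+b}$ — namely, that the center, being a centralizer, lies in $\CD(G)$ and therefore carries the maximal measure — rather than attempting to compute $M$ by some other route.
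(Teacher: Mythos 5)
Your proof is correct and matches the paper's argument: both observe that $\mathbf{Z}(G)$ and any self-centralizing abelian subgroup $A$ lie in $\mathcal{C}(G) = \CD(G)$, hence share the maximal measure, and then equate $p^{a+b} = m_G(\mathbf{Z}(G)) = m_G(A) = |A|^2$. The paper's version is just a terser statement of exactly this computation.
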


\begin{proof}
$G$, $\mathbf{Z}(G)$, and any self-centralizing abelian subgroup $A$ are all in $\mathcal{C}(G)$.  And because $\mathcal{C}(G) = \CD(G)$, such subgroups all have the same Chermak--Delgado measure, and the result follows.
\end{proof}

The rest of the section investigates when $\mathcal{C}(G)$ forms a quasi-antichain, i.e., the lattice consists of a maximum, a minimum, and the atoms of the lattice.  A nonabelian group $G$ with $\mathcal{C}(G)$ a quasi-antichain is referred to as an $\mathcal{M}$-group in \cite{Schmidt_70}.

 We say that a centralizer $C$ of $G$ is minimal (maximal) if $C$ is an atom (coatom) in $\mathcal{C}(G)$.  

\begin{proposition}\label{prop: max_cent_elt}
    Let $G$ be a nonabelian finite group.  If $M$ is a maximal centralizer, then $M = \mathbf{C}_G(x)$ for some $x \in G - \mathbf{Z}(G)$.
\end{proposition}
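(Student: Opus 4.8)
The plan is to exploit the double-centralizer identity from Proposition \ref{prop: basic_cent}(1) to realize $M$ as an intersection of element centralizers, and then let the coatom hypothesis force one of those element centralizers to equal $M$. Since $G$ is nonabelian, $\mathbf{Z}(G) = \mathbf{C}_G(G)$ is a proper centralizer, so $\mathcal{C}(G)$ has $G$ as its maximum together with at least one smaller element; in particular a coatom $M$ exists and satisfies $M < G$.

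First I would set $D = \mathbf{C}_G(M)$. Because $M \in \mathcal{C}(G)$, Proposition \ref{prop: basic_cent}(1) gives $M = \mathbf{C}_G(\mathbf{C}_G(M)) = \mathbf{C}_G(D)$. Writing out the defining condition of a centralizer, an element lies in $\mathbf{C}_G(D)$ exactly when it commutes with every $d \in D$, so
\begin{equation*}
M = \mathbf{C}_G(D) = \bigcap_{d \in D} \mathbf{C}_G(d).
\end{equation*}
Each $\mathbf{C}_G(d)$ is a genuine centralizer (as noted after Proposition \ref{prop: basic_cent}), and since every element of $M$ commutes with every element of $D$, we have $M \subseteq \mathbf{C}_G(d)$ for each $d \in D$.

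Next I would invoke the coatom property. For each $d \in D$ the centralizer $\mathbf{C}_G(d)$ satisfies $M \subseteq \mathbf{C}_G(d) \subseteq G$, and since $M$ is a coatom of $\mathcal{C}(G)$ there is nothing strictly between $M$ and $G$ in $\mathcal{C}(G)$; hence $\mathbf{C}_G(d) = M$ or $\mathbf{C}_G(d) = G$. The case $\mathbf{C}_G(d) = G$ occurs precisely when $d \in \mathbf{Z}(G)$. If this held for every $d \in D$, then $D \subseteq \mathbf{Z}(G)$ would give $M = \mathbf{C}_G(D) \supseteq \mathbf{C}_G(\mathbf{Z}(G)) = G$, contradicting $M < G$. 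Therefore some $x = d \in D$ has $x \notin \mathbf{Z}(G)$ and $\mathbf{C}_G(x) = M$, which is exactly the claim.

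The argument is short, and the only delicate point is the translation of the coatom condition into the dichotomy $\mathbf{C}_G(d) \in \{M, G\}$ together with the observation that ruling out the degenerate possibility $D \subseteq \mathbf{Z}(G)$ is what guarantees a suitable $x$ exists outside the center. I expect that step — recognizing that taking $D = \mathbf{C}_G(M)$ and intersecting over its elements converts maximality in $\mathcal{C}(G)$ into the existence of a single generating element — to be the crux, with everything else being a direct application of the basic centralizer facts already established.
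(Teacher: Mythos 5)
Your proof is correct and follows essentially the same route as the paper: both arguments locate a noncentral element $x$ in a set whose centralizer is $M$, observe $M \leq \mathbf{C}_G(x) < G$, and let maximality force $M = \mathbf{C}_G(x)$. The only difference is that the paper takes an arbitrary $H$ with $M = \mathbf{C}_G(H)$ (available by definition of $\mathcal{C}(G)$) and picks $x \in H - \mathbf{Z}(G)$ directly, whereas you build the canonical witness $D = \mathbf{C}_G(M)$ via the double-centralizer identity and rule out $D \subseteq \mathbf{Z}(G)$ by contradiction --- a slightly longer path to the same crux.
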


\begin{proof}
  So $M = \mathbf{C}_G(H)$, and since $M  < G$, $H \nsubseteq \mathbf{Z}(G)$.  Let $x \in H - \mathbf{Z}(G)$.  Then $M = \mathbf{C}_G(H) \leq \mathbf{C}_G(x) < G$.  And since $M$ is a maximal centralizer, we have $M = \mathbf{C}_G(x)$.
\end{proof}

Note that the converse in not true, and so the centralizer of a non-central element does not need to be a maximal centralizer.  Take $G= D_8 \times D_8$.  Then $\mathbf{C}_G((s,s)) = \langle s,r^2 \rangle \times \langle s,r^2 \rangle < \langle s,r^2 \rangle \times D_8 = \mathbf{C}_G((s,1))$.

The next result appears in \cite{Schmidt_70}, and we suggest the reader does not look it up, but proves it as an exercise.

\begin{proposition}\label{prop: max_cent_ab}
    A finite group $G$ has that $\mathcal{C}(G)$ is a quasi-antichain if and only if every maximal centralizer of $G$ is abelian.
\end{proposition}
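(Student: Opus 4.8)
The engine of the whole argument is that, by Proposition \ref{prop: basic_cent}(1), the map $C \mapsto \mathbf{C}_G(C)$ is an order-reversing involution of the lattice $\mathcal{C}(G)$: it is its own inverse and obviously inclusion-reversing, hence an anti-automorphism. In particular it interchanges the bottom $\mathbf{Z}(G) = \mathbf{C}_G(G)$ with the top $G = \mathbf{C}_G(\mathbf{Z}(G))$, and it carries the atoms (minimal centralizers) bijectively onto the coatoms (maximal centralizers). The plan is to record this at the outset, since it lets me pass freely between minimal and maximal centralizers. The degenerate case $G$ abelian, where $\mathcal{C}(G) = \{G\}$, is trivial on both sides, so I assume $\mathbf{Z}(G) < G$; note that for nonabelian $G$ there is always a proper centralizer strictly between $\mathbf{Z}(G)$ and $G$, since for $x \in G - \mathbf{Z}(G)$ we have $x \in \mathbf{C}_G(x)$ and $\mathbf{C}_G(x) \neq G$.

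For the forward direction I would assume $\mathcal{C}(G)$ is a quasi-antichain, so that its atoms and coatoms coincide: they are exactly the centralizers strictly between $\mathbf{Z}(G)$ and $G$. Let $M$ be a maximal centralizer. By Proposition \ref{prop: max_cent_elt}, $M = \mathbf{C}_G(x)$ for some $x \in G - \mathbf{Z}(G)$. The crucial observation is that $x$ lies in both $M$ and $\mathbf{C}_G(M)$: trivially $x \in \mathbf{C}_G(x) = M$, and since $x$ centralizes every element of $\mathbf{C}_G(x)$ we have $x \in \mathbf{C}_G(\mathbf{C}_G(x)) = \mathbf{C}_G(M)$. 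Now $\mathbf{C}_G(M)$ is also an atom. Were $M \neq \mathbf{C}_G(M)$, these would be two distinct atoms, whose meet (their intersection as centralizers) is the bottom $\mathbf{Z}(G)$; but $x$ lies in the intersection and $x \notin \mathbf{Z}(G)$, a contradiction. Hence $M = \mathbf{C}_G(M)$, so $M$ is self-centralizing and therefore abelian.

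For the converse, assume every maximal centralizer is abelian; I must show that no centralizer sits strictly between $\mathbf{Z}(G)$ and $G$ without being an atom. Take any centralizer $C$ with $\mathbf{Z}(G) < C < G$ and choose a maximal centralizer $M \geq C$. By hypothesis $M$ is abelian, so $M \leq \mathbf{C}_G(M)$. Since $M$ is a coatom and $\mathbf{C}_G(M)$ is a centralizer containing $M$, either $\mathbf{C}_G(M) = G$ or $\mathbf{C}_G(M) = M$; the first would force $M \leq \mathbf{Z}(G)$, impossible as $M \geq C > \mathbf{Z}(G)$, so $\mathbf{C}_G(M) = M$. Applying the anti-automorphism, a coatom fixed by $\mathbf{C}_G$ is also an atom, so $M$ is simultaneously an atom and a coatom. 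Then there is nothing strictly between $\mathbf{Z}(G)$ and $M$, and since $\mathbf{Z}(G) < C \leq M$ we conclude $C = M$. Thus every centralizer other than $\mathbf{Z}(G)$ and $G$ is an atom, which is exactly the assertion that $\mathcal{C}(G)$ is a quasi-antichain.

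I expect the main obstacle to be the converse, specifically the step showing that an abelian maximal centralizer must equal its own centralizer and hence be minimal as well: this is what collapses the height of $\mathcal{C}(G)$ to two and rules out chains $\mathbf{Z}(G) < C_1 < C_2 < G$. Everything else is bookkeeping with the involution $\mathbf{C}_G$ and the elementary fact that $x \in \mathbf{C}_G(x) \cap \mathbf{C}_G(\mathbf{C}_G(x))$ for every $x$.
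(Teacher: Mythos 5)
Your proof is correct, but there is nothing in the paper to compare it against: the authors explicitly decline to prove this proposition, attributing it to \cite{Schmidt_70} and suggesting the reader treat it as an exercise, so your argument stands as a solution to that exercise rather than an alternative to an in-paper proof. Your solution is the natural one given the paper's toolkit: the order-reversing involution $C \mapsto \mathbf{C}_G(C)$ of $\mathcal{C}(G)$ from Proposition \ref{prop: basic_cent}(1) exchanges atoms and coatoms, and both directions exploit this correctly. In the forward direction, writing $M = \mathbf{C}_G(x)$ with $x \in G - \mathbf{Z}(G)$ (Proposition \ref{prop: max_cent_elt}) and noting $x \in M \cap \mathbf{C}_G(M)$ correctly rules out $M$ and $\mathbf{C}_G(M)$ being distinct atoms, since distinct atoms of $\mathcal{C}(G)$ meet in $\mathbf{Z}(G)$ (the intersection of centralizers is again a centralizer, so the lattice meet is intersection). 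In the converse, the dichotomy $\mathbf{C}_G(M) \in \{M, G\}$ for an abelian coatom $M$, with $\mathbf{C}_G(M) = G$ excluded because it would force $M \leq \mathbf{Z}(G)$, shows $M$ is self-centralizing and hence simultaneously an atom and a coatom, which pins every intermediate centralizer down to an atom. The degenerate abelian case and the existence of a proper centralizer strictly between $\mathbf{Z}(G)$ and $G$ for nonabelian $G$ are also handled correctly, so the argument is complete.
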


\begin{proposition}\label{prop: max_union}
    A nonabelian finite group $G$ is equal to the union of its maximal centralizers. 
\end{proposition}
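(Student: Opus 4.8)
The plan is to show that every element of $G$ lies in at least one maximal centralizer; since each maximal centralizer is a subgroup of $G$, this immediately yields that $G$ equals the union of its maximal centralizers. First I would record that maximal centralizers exist: because $G$ is nonabelian we have $\mathbf{Z}(G) < G$, so there is a non-central element $y$, and then $\mathbf{C}_G(y)$ is a \emph{proper} centralizer of $G$. Since $\mathcal{C}(G)$ is a finite lattice with maximum $G$, any centralizer strictly below $G$ sits beneath a coatom, so at least one maximal centralizer exists.

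The argument then splits into two cases according to whether a given $x \in G$ is central. For $x \notin \mathbf{Z}(G)$, the subgroup $\mathbf{C}_G(x)$ is a centralizer (as noted after Proposition \ref{prop: basic_cent}, $\mathbf{C}_G(x) \in \mathcal{C}(G)$) and is proper in $G$. By finiteness of $\mathcal{C}(G)$ it is contained in some maximal centralizer $M$, and since $x \in \mathbf{C}_G(x) \leq M$, the element $x$ lies in a maximal centralizer. For $x \in \mathbf{Z}(G)$, the element $x$ commutes with every element of $G$ and hence lies in \emph{every} centralizer; in particular it lies in any maximal centralizer, of which we already know one exists.

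Combining the two cases shows that each $x \in G$ belongs to some maximal centralizer, which completes the proof. I do not expect a serious obstacle here: the content is essentially that in a finite lattice every element below the top lies beneath a coatom, together with the elementary fact that $x \in \mathbf{C}_G(x)$. The only point requiring care is the central case, where $\mathbf{C}_G(x) = G$ is not itself proper; this is handled by observing that central elements lie in all centralizers and that the nonabelian hypothesis guarantees a maximal centralizer to house them.
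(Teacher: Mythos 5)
Your proof is correct and follows essentially the same argument as the paper: non-central elements $x$ lie in the proper centralizer $\mathbf{C}_G(x)$, which sits below some maximal centralizer, while central elements are contained in every centralizer and hence in any maximal one. Your version simply makes explicit the existence of maximal centralizers and the case split, which the paper leaves implicit.
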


\begin{proof}
A centralizer must contain $\mathbf{Z}(G)$.  If $x \in G-\mathbf{Z}(G)$, then $x \in \mathbf{C}_G(x) \leq M$ for some maximal centralizer $M$.  Thus $G$ is equal to the union of its maximal centralizers. 
\end{proof}

It is a well-known result that a group cannot equal the union of two of its proper subgroups.  This yields the following.

\begin{corollary}\label{cor: w_gr_2}
    In a nonabelian finite group $G$, the number of maximal (minimal) centralizers is greater than $2$. 
 In particular, if $\mathcal{C}(G)$ is a quasi-antichain, then the width of $\mathcal{C}(G)$ is greater than $2$.
\end{corollary}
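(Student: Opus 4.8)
The statement to prove (Corollary \ref{cor: w_gr_2}) has two parts. The first asserts that a nonabelian finite group $G$ has more than two maximal centralizers (and dually more than two minimal centralizers). The second specializes this to the quasi-antichain case to bound the width of $\mathcal{C}(G)$. My plan is to leverage Proposition \ref{prop: max_union}, which tells us that $G$ is the union of its maximal centralizers, together with the classical fact quoted just before the statement: a group is never the union of two of its proper subgroups.

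For the first part, I would argue by contraposition on the number of maximal centralizers. Since $G$ is nonabelian, $\mathbf{Z}(G) < G$, and so every centralizer of a non-central element is a \emph{proper} subgroup; in particular the maximal centralizers are proper subgroups of $G$. By Proposition \ref{prop: max_union}, $G$ equals the union of these maximal centralizers. If there were at most two maximal centralizers, then $G$ would be the union of at most two proper subgroups, contradicting the well-known fact. A nonabelian group must have at least one non-central element, hence at least one maximal centralizer, so the count is at least one, and we have just ruled out one and two; therefore the number of maximal centralizers exceeds $2$. The dual statement for minimal centralizers follows by applying the same reasoning to the lattice $\mathcal{C}(G)$ under the anti-isomorphism $C \mapsto \mathbf{C}_G(C)$ furnished by Proposition \ref{prop: basic_cent}(1): this order-reversing bijection of $\mathcal{C}(G)$ to itself carries maximal centralizers to minimal centralizers, so the bound on maximal centralizers transfers directly.

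For the second part, suppose $\mathcal{C}(G)$ is a quasi-antichain. By Proposition \ref{prop: max_cent_ab}, every maximal centralizer is abelian, and in a quasi-antichain the maximal centralizers are precisely the atoms, which together with the minimal centralizers form the single ``middle layer'' of incomparable elements (the atoms equal the coatoms in a quasi-antichain). The width of $\mathcal{C}(G)$ is the size of this antichain of maximal centralizers, and by the first part this number is greater than $2$. I would state this as a direct consequence, taking care to note that in a quasi-antichain the maximal centralizers form the unique maximal antichain realizing the width.

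The main obstacle I anticipate is purely bookkeeping rather than conceptual: making precise that in a quasi-antichain the ``atoms'' counted by Proposition \ref{prop: max_cent_ab} coincide with the maximal centralizers counted in the first part, so that the width is exactly their number. The group-theoretic core — union of proper subgroups — is immediate from the cited fact, and the duality is a clean application of Proposition \ref{prop: basic_cent}(1); the only care needed is confirming that the order-reversing involution genuinely sends the maximal elements to minimal elements and does not collapse the count.
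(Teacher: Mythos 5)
Your proposal is correct and follows essentially the same route as the paper: the number of maximal centralizers exceeds $2$ by combining Proposition \ref{prop: max_union} with the fact that a group is never the union of two proper subgroups, and the count for minimal centralizers follows via the order-reversing involution $C \mapsto \mathbf{C}_G(C)$ on $\mathcal{C}(G)$, which matches atoms with coatoms. The quasi-antichain conclusion is then immediate, just as in the paper (your citation of Proposition \ref{prop: max_cent_ab} there is unnecessary but harmless).
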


\begin{proof}
That the number of maximal centralizers is greater than $2$ is a consequence of Proposition \ref{prop: max_union} and the fact that a group cannot equal the union of two of its proper subgroups.

Now the centralizer map induces a duality on $\mathcal{C}(G)$, and so the number of coatoms is equal to the number of atoms. Therefore the number of minimal centralizers is greater than $2$.
\end{proof}

Quasi-antichain Chermak--Delgado lattices were studied in \cite{Bre_14_2} and \cite{An_19}.  We present one of the main results of \cite{Bre_14_2}.

\begin{proposition}\label{prop: CD_quasi}
    If $G$ is a finite group with $\CD(G)$ a quasi-antichain of width $w \geq 3$
and $G \in \CD(G)$, then $G$ is nilpotent of class $2$; in fact, there exists a prime
$p$, a nonabelian Sylow $p$-subgroup $P$ with nilpotence class $2$, and an abelian
Hall $p'$-subgroup $Q$ such that $G = P \times Q$, $P \in \CD(P)$, and $\CD(G) \cong \CD(P)$ as lattices. Moreover, there exist positive integers $a$, $b$ with $b \leq a$ such that
$|G/\mathbf{Z}(G)| = |P/\mathbf{Z}(P)| = p^{2a}$ and $w = p^b + 1$.
\end{proposition}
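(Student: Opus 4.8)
The plan is to exploit the self-duality of $\CD(G)$ under centralizers to pin down the shape of the quasi-antichain, then bootstrap from abelian atoms to nilpotency and finally to the numerical data. First I would record the coarse structure: since $G \in \CD(G)$ and $G$ is the largest subgroup of $G$, it is the maximum of $\CD(G)$, and because $H \mapsto \mathbf{C}_G(H)$ is an order-reversing bijection of $\CD(G)$, the minimum is $\mathbf{C}_G(G) = \mathbf{Z}(G)$ and this involution permutes the $w$ atoms. Every element of $\CD(G)$ has measure $m_G(G) = |G|\cdot|\mathbf{Z}(G)|$. Translating the quasi-antichain meets and joins through the lattice operations of $\CD(G)$ (meet $=$ intersection, join $=$ double centralizer of the intersection of the centralizers), I obtain that distinct atoms $A, A'$ satisfy $A \cap A' = \mathbf{Z}(G)$, and dually $\mathbf{C}_G(A) \cap \mathbf{C}_G(A') = \mathbf{Z}(G)$.

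Next I would show every atom is abelian and self-centralizing. For an atom $A$, its centralizer $\mathbf{C}_G(A)$ is again an atom and $\mathbf{Z}(A) = A \cap \mathbf{C}_G(A)$. If $\mathbf{C}_G(A) = A$ then $A$ is maximal abelian immediately; if $\mathbf{C}_G(A) = A' \neq A$, then $A \cap A' = \mathbf{Z}(G)$ and $[A,A']=1$, and a short measure computation gives $\mathbf{C}_G(AA') = \mathbf{Z}(G)$ and $m_G(AA') = |A|\cdot|A'| = m_G(G)$, so $AA' \in \CD(G)$ lies strictly above the atom $A$ and hence equals $G$, exhibiting $G$ as a central product $A \ast_{\mathbf{Z}(G)} A'$. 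I would use the presence of a third atom (width $\geq 3$) to exclude this central-product case, or to force $A, A'$ abelian; this case analysis is one of the delicate points. Granting that atoms are abelian, they are nilpotent subnormal subgroups (subnormal by Proposition \ref{prop: CD_Sn}), hence contained in the Fitting subgroup $\mathbf{F}(G)$; since any two distinct atoms generate $G$, we get $G = \mathbf{F}(G)$, so $G$ is nilpotent.

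With $G$ nilpotent I would decompose it as the direct product of its Sylow subgroups and invoke $\CD(G_1 \times \cdots \times G_n) = \CD(G_1) \times \cdots \times \CD(G_n)$. A direct product of finite lattices is a quasi-antichain of width $\geq 3$ only when all but one factor is a single point, so exactly one Sylow subgroup $P$ contributes and $\CD(G) \cong \CD(P)$. For every other Sylow subgroup $R$ we have $\CD(R) = \{R\}$; since $R \in \CD(R)$ and $m_R(\mathbf{Z}(R)) = m_R(R)$ forces $\mathbf{Z}(R) \in \CD(R)$, we get $R = \mathbf{Z}(R)$, i.e.\ $R$ is abelian. Collecting the abelian Sylow subgroups into $Q$ gives $G = P \times Q$ with $Q$ an abelian Hall $p'$-subgroup, $P$ nonabelian (else $\CD(P)$ would be a single point), and $P \in \CD(P)$.

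It remains to analyze the $p$-group $P$. Working in $\bar P = P/\mathbf{Z}(P)$, the atoms become subgroups $\bar A_i$ with $|\bar A_i|\cdot|\overline{\mathbf{C}_P(A_i)}| = |\bar P|$ and pairwise trivial intersection; self-centralization forces each $|\bar A_i| = p^a$ with $|\bar P| = p^{2a}$. Class $2$ amounts to showing $\bar P$ is abelian, which I would extract from the rigidity of having at least three pairwise-complementary maximal abelian subgroups satisfying $P = A_iA_j$ and $P' = [A_i,A_j]$. Finally, viewing $\bar P$ as a symplectic-type space under the (nondegenerate, since $\mathbf{Z}(P)$ is the radical) commutator form, the atoms are maximal totally isotropic subgroups of order $p^a$ forming a partial spread, and a counting argument—the complementary atoms through a fixed one being governed by a subfield $\mathbb{F}_{p^b}$—yields $w = p^b + 1$ with $b \leq a$. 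I expect this last count, together with the class-$2$ step, to be the main obstacle, since it is precisely where the quasi-antichain hypothesis of width $\geq 3$ must be converted into rigid linear-algebraic structure on $\bar P$.
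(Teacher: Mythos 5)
A preliminary remark: the paper does not prove this proposition at all; it is stated as ``one of the main results of \cite{Bre_14_2}'' (Brewster--Hauck--Wilcox) and cited without proof, so your attempt can only be compared with that cited argument, not with anything in this paper. Much of your outline is correct, and parts are genuinely nice: the duality bookkeeping ($G$ the maximum, $\mathbf{Z}(G)$ the minimum, every member of measure $|G|\,|\mathbf{Z}(G)|$, distinct atoms meeting in $\mathbf{Z}(G)$ and joining to $G$); the computation showing that a nonabelian atom $A$ forces $G = A\,\mathbf{C}_G(A)$, a central product over $\mathbf{Z}(G)$; the Fitting-subgroup route to nilpotency (abelian atoms are nilpotent and subnormal by Proposition \ref{prop: CD_Sn}, hence lie in $\mathbf{F}(G)$, and two distinct atoms already generate $G$); and the reduction to $G = P \times Q$ via $\CD(G) \cong \CD(P_1) \times \cdots \times \CD(P_n)$, including the observations that a direct product of lattices is a quasi-antichain of width at least $3$ only if all but one factor is a singleton, and that $\CD(R) = \{R\}$ forces $R$ abelian because $m_R(\mathbf{Z}(R)) = m_R(R)$ always.

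The problem is that the three assertions carrying the actual content of the proposition are exactly the ones you defer instead of prove. First, you never show that the central-product case cannot occur, i.e., that every atom is abelian; you only say you ``would use the presence of a third atom.'' This gap is fatal to everything downstream: a nonabelian atom has no reason to be nilpotent, so without this step the Fitting argument, and hence nilpotency and the Sylow decomposition, never get started. Second, nilpotency class $2$ of $P$ is asserted to follow ``from the rigidity'' of having three pairwise-complementary maximal abelian subgroups, which is not an argument. Third, the formula $w = p^b + 1$ with $b \leq a$ is attributed to a count ``governed by a subfield $\mathbb{F}_{p^b}$,'' but no such field structure is exhibited, and producing the relevant algebraic structure is the hardest step. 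These three points are precisely what occupies most of the cited Brewster--Hauck--Wilcox paper; roughly, a third atom in the central-product configuration is a diagonal subgroup (the graph of an isomorphism between the two factors modulo the center) whose centralizer analysis yields a contradiction, and the width count comes from showing that the atoms' images in $G/\mathbf{Z}(G)$ are graphs of linear maps whose pairwise differences are invertible and which, together with $0$, are closed under differences, hence form an additive $p$-group of order $p^b$. Your proposal is a sound scaffold with the easy steps done correctly, but the quasi-antichain hypothesis is never actually converted into the structural conclusions, so it is not a proof.
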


Using the above results, we obtain the following.

\begin{theorem}
Suppose that $G$ is a nonabelian $p$-group.  Then $\mathcal{C}(G) = \CD(G)$ is a quasi-antichain if and only if the following all hold:
\begin{itemize}
    \item $G$ is nilpotent of class 2;
    \item $\mathcal{C}(G) = \{ G, A_1, \dots, A_w, \mathbf{Z}(G) \}$, where each $A_i$ is a maximal abelian subgroup of $G$ of order $|A_i| = \sqrt{|G||\mathbf{Z}(G)|}$; 
    \item there exist positive integers $a$, $b$ with $b \leq a$ such that
$|G/\mathbf{Z}(G)| = p^{2a}$ and $w=p^b+1$.
\end{itemize} 
\end{theorem}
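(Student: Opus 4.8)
The plan is to prove each implication by assembling the structural results already in hand, with the deep content outsourced to Proposition \ref{prop: CD_quasi}.

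For the forward direction, I would assume $\mathcal{C}(G) = \CD(G)$ is a quasi-antichain. First note that $G = \mathbf{C}_G(\mathbf{Z}(G)) \in \mathcal{C}(G) = \CD(G)$, and that since $G$ is nonabelian, Corollary \ref{cor: w_gr_2} forces the width $w$ of $\mathcal{C}(G)$ to satisfy $w \geq 3$. These are exactly the hypotheses of Proposition \ref{prop: CD_quasi}, so applying it (with trivial Hall $p'$-part, as $G$ is a $p$-group, so $G = P$) yields immediately that $G$ is nilpotent of class $2$ and furnishes positive integers $a, b$ with $b \leq a$ such that $|G/\mathbf{Z}(G)| = p^{2a}$ and $w = p^b + 1$. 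This delivers conditions (1) and (3) in one stroke.

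It then remains to pin down condition (2). As a quasi-antichain, $\mathcal{C}(G)$ consists of its maximum $G$, its minimum $\mathbf{Z}(G) = \mathbf{C}_G(G)$ (minimal because every centralizer contains the center), and its $w$ atoms $A_1, \dots, A_w$, which in a quasi-antichain coincide with the coatoms, i.e. with the maximal centralizers. By Proposition \ref{prop: max_cent_ab} each $A_i$ is abelian. To see each $A_i$ is self-centralizing I would argue as follows: $A_i \leq \mathbf{C}_G(A_i) \in \mathcal{C}(G)$, and $\mathbf{C}_G(A_i) \neq G$ because $A_i \not\leq \mathbf{Z}(G)$ (as $A_i$ strictly exceeds the minimum). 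Hence $\mathbf{C}_G(A_i)$ is an atom or equals $\mathbf{Z}(G)$; since it contains the atom $A_i$ it cannot be $\mathbf{Z}(G)$, and being an atom containing the atom $A_i$ it must equal $A_i$. Thus each $A_i$ is a self-centralizing abelian subgroup, and Proposition \ref{prop: quasi} forces $|A_i| = \sqrt{|G|\,|\mathbf{Z}(G)|}$, completing condition (2).

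For the converse I would assume (1)--(3), though in fact only (2) is needed. The explicit list in (2) exhibits $\mathcal{C}(G)$ as a quasi-antichain: $G$ and $\mathbf{Z}(G)$ are the maximum and minimum, while the $A_i$, all of the common order $\sqrt{|G|\,|\mathbf{Z}(G)|}$ lying strictly between $|\mathbf{Z}(G)|$ and $|G|$ (using $|G| > |\mathbf{Z}(G)|$), are pairwise incomparable (equal order forces equality under containment) and strictly between $\mathbf{Z}(G)$ and $G$. To obtain $\mathcal{C}(G) = \CD(G)$, I would compute that every centralizer shares the same measure $|G|\cdot|\mathbf{Z}(G)|$: indeed $m_G(G) = m_G(\mathbf{Z}(G)) = |G|\,|\mathbf{Z}(G)|$, and since each $A_i$ is self-centralizing, $m_G(A_i) = |A_i|^2 = |G|\,|\mathbf{Z}(G)|$. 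Because $\CD(G) \subseteq \mathcal{C}(G)$ is nonempty, the maximal value of $m_G$ is attained inside $\mathcal{C}(G)$ and hence equals $|G|\,|\mathbf{Z}(G)|$; therefore every centralizer attains the maximum and lies in $\CD(G)$, giving $\mathcal{C}(G) = \CD(G)$.

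The main obstacle is really the forward direction's dependence on Proposition \ref{prop: CD_quasi}, which carries all the hard content (nilpotence class $2$ and the arithmetic relating $w$ to $p^b+1$); once its hypotheses $w \geq 3$ and $G \in \CD(G)$ are verified, the rest is bookkeeping about the quasi-antichain shape together with the order computation from Proposition \ref{prop: quasi}. The one step demanding genuine care is the self-centralizing claim for the atoms, since it is precisely what identifies the lattice-theoretic atoms with the ``maximal abelian'' subgroups named in condition (2) and what makes the measure computation in the converse go through.
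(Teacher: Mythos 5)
Your proof is correct and takes essentially the same route as the paper's: the forward direction rests on Proposition~\ref{prop: max_cent_ab}, Corollary~\ref{cor: w_gr_2}, Proposition~\ref{prop: quasi}, and Proposition~\ref{prop: CD_quasi}, and the converse on the constancy of $m_G$ across $\mathcal{C}(G)$ together with $\CD(G) \subseteq \mathcal{C}(G)$. If anything, you supply a detail the paper leaves implicit, namely the argument that the abelian atoms (equivalently coatoms) are genuinely self-centralizing, which is what licenses both the appeal to Proposition~\ref{prop: quasi} and the computation $m_G(A_i) = |A_i|^2$ in the converse.
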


\begin{proof}
   Suppose first that $\mathcal{C}(G) = \CD(G)$ is a quasi-antichain. So $G, \mathbf{Z}(G) \in \mathcal{C}(G)$, and $\mathcal{C}(G) = \CD(G) = \{ G, A_1, \dots, A_w, \mathbf{Z}(G) \}$.  The fact that $A_1,\dots,A_w$ are maximal abelian is Proposition \ref{prop: max_cent_ab}.  The fact that $w \geq 3$ is Corollary \ref{cor: w_gr_2}.  The rest follows directly from Proposition \ref{prop: quasi} and Proposition \ref{prop: CD_quasi}.

   Conversely, if $\mathcal{C}(G)$ has the prescribed properties, then since $\CD(G) \subseteq \mathcal{C}(G)$ and since $m_G$ is constant on $\mathcal{C}(G)$, we have $\mathcal{C}(G) = \CD(G)$.
\end{proof}

Let $p$ be a prime and $n$ a positive integer. Let $G$ be the group
of all $3 \times 3$ lower triangular matrices over $GF(p^n)$ with $1$'s along the diagonal.  This example was given in \cite{Bre_14_2} to show that quasi-antichain Chermak--Delgado lattices with all atoms abelian exist for all possible widths $p^n + 1$.  It is not hard to see that for each $x \in G - \mathbf{Z}(G)$, $\mathbf{C}_G(x)$ is abelian. So it follows from Proposition \ref{prop: max_cent_elt} that every maximal centralizer is abelian. By Proposition \ref{prop: max_cent_ab} we have that $\mathcal{C}(G)$ is a quasi-antichain. 
 Now there are exactly $p^n + 1$ of these maximal abelian subgroups, each of them having order $p^{2n}$, and so width of $\mathcal{C}(G)$ is $p^n + 1$, and since $m_G$ is constant on $\mathcal{C}(G)$, we have that $\CD(G) = \mathcal{C}(G)$.

\bibliographystyle{plainnat}
\bibliography{ref}

\end{document}